\newtheorem{theorem}{Theorem}[section]
\newtheorem{lemma}[theorem]{Lemma}
\newtheorem{proposition}[theorem]{Proposition}
\theoremstyle{remark}
\newtheorem{definition}[theorem]{\sc Definition}
\newtheorem{remark}[theorem]{\sc Remark}
\newtheorem{example}[theorem]{\sc Example}
\newtheorem*{acknowledgment}{\sc Acknowledgment}
\numberwithin{equation}{section}
\begin{document}

\title{Toric weak Fano varieties associated to building sets}

\author{Yusuke Suyama}
\address{Department of Mathematics, Graduate School of Science, Osaka City University,
3-3-138 Sugimoto, Sumiyoshi-ku, Osaka 558-8585 JAPAN}
%\curraddr{}
\email{d15san0w03@st.osaka-cu.ac.jp}
%\thanks{}

\subjclass[2010]{Primary 14M25; Secondary 14J45, 05C20.}

\keywords{toric weak Fano varieties, building sets,
nested sets, reflexive polytopes, directed graphs.}

\date{\today}

%\dedicatory{}

\begin{abstract}
We give a necessary and sufficient condition
for the nonsingular projective toric variety associated to a building set
to be weak Fano in terms of the building set.
\end{abstract}

\maketitle

\section{Introduction}

A {\it toric Fano variety} is a nonsingular projective toric variety over $\mathbb{C}$
whose anticanonical divisor is ample.
It is known that there are a finite number of isomorphism classes
of toric Fano varieties in any given dimension.
The classification problem of toric Fano varieties
has been studied by many researchers.
In particular, {\O}bro \cite{Obro} gave an explicit algorithm
that classifies all toric Fano varieties for any dimension.

A nonsingular projective algebraic variety is said to be {\it weak Fano}
if its anticanonical divisor is nef and big.
Sato \cite{Sato02} classified toric weak Fano 3-folds
that are not Fano but are deformed to Fano under a small deformation,
which are called toric {\it weakened Fano} 3-folds.

We can construct a nonsingular projective toric variety from a building set.
Since a finite simple graph defines a building set, which is called the {\it graphical building set},
we can also associate to the graph a toric variety (see, for example \cite{Zelevinsky}).
The author \cite{Suyama1604, Suyama1611} characterized finite simple graphs
whose associated toric varieties are Fano or weak Fano,
and building sets whose associated toric varieties are Fano.
In this paper, we characterize building sets
whose associated toric varieties are weak Fano (see Theorem \ref{main}).
Our theorem is proved combinatorially by using the fact that
the intersection number of the anticanonical divisor with a torus-invariant curve
can be computed in terms of the building set.

A toric weak Fano variety defines a reflexive polytope.
Higashitani \cite{Higashitani} constructed
integral convex polytopes from finite directed graphs
and gave a necessary and sufficient condition for the polytope to be terminal and reflexive.
We also discuss a difference between the class of reflexive polytopes
defined by toric weak Fano varieties associated to building sets,
and that of reflexive polytopes associated to finite directed graphs.

The structure of the paper is as follows:
In Section 2, we review the construction of a toric variety from a building set
and state the characterization of building sets
whose associated toric varieties are weak Fano.
In Section 3, we consider reflexive polytopes associated to building sets
and finite directed graphs.
In Section 4, we give a proof of the main theorem.

\begin{acknowledgment}
This work was supported by Grant-in-Aid for JSPS Fellows 15J01000.
The author wishes to thank his supervisor, Professor Mikiya Masuda,
for his valuable advice, and Professor Akihiro Higashitani for his useful comments.
\end{acknowledgment}

\section{The main result}

Let $S$ be a nonempty finite set.
A {\it building set} on $S$ is a finite set $B$ of nonempty subsets of $S$
satisfying the following conditions:
\begin{enumerate}
\item $I, J \in B$ with $I \cap J \ne \emptyset$ implies $I \cup J \in B$.
\item $\{i\} \in B$ for every $i \in S$.
\end{enumerate}
We denote by $B_{\rm max}$ the set of all maximal (by inclusion) elements of $B$.
An element of $B_{\rm max}$ is called a {\it $B$-component}
and $B$ is said to be {\it connected} if $B_{\rm max}=\{S\}$.
For a nonempty subset $C$ of $S$, we call $B|_C=\{I \in B \mid I \subset C\}$
the {\it restriction} of $B$ to $C$. $B|_C$ is a building set on $C$.
Note that $B|_C$ is connected if and only if $C \in B$.
For any building set $B$, we have $B=\bigsqcup_{C \in B_{\rm max}} B|_C$.
In particular, any building set is a disjoint union of connected building sets.

\begin{definition}\label{nested}
Let $B$ be a building set.
A {\it nested set} of $B$ is a subset $N$ of $B \setminus B_{\rm max}$
satisfying the following conditions:
\begin{enumerate}
\item If $I, J \in N$, then we have either
$I \subset J$ or $J \subset I$ or $I \cap J=\emptyset$.
\item For any integer $k \geq 2$ and for any pairwise disjoint $I_1, \ldots, I_k \in N$,
the union $I_1 \cup \cdots \cup I_k$ is not in $B$.
\end{enumerate}
\end{definition}

Note that the empty set is a nested set for any building set. 
The set $\mathcal{N}(B)$ of all nested sets of $B$ is called the {\it nested complex}.
$\mathcal{N}(B)$ is in fact an abstract simplicial complex on $B \setminus B_{\rm max}$.

\begin{proposition}[{\cite[Proposition 4.1]{Zelevinsky}}]\label{pure}
Let $B$ be a building set on $S$.
Then every maximal (by inclusion) nested set of $B$
has the same cardinality $|S|-|B_{\rm max}|$.
In particular, if $B$ is connected,
then the cardinality of every maximal nested set of $B$ is $|S|-1$.
\end{proposition}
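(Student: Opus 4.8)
The plan is to reduce to the case of a connected building set and then to induct on $|S|$.

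\emph{Reduction to the connected case.} Since the $B$-components partition $S$ (if $C_1,C_2\in B_{\max}$ meet, then $C_1\cup C_2\in B$ forces $C_1=C_2$) and $B=\bigsqcup_{C\in B_{\max}}B|_C$, the first step is to show that $N\mapsto (N\cap B|_C)_{C\in B_{\max}}$ is a bijection from the nested sets of $B$ to the tuples of nested sets of the connected building sets $B|_C$, restricting to a bijection between the maximal ones. Each $N\cap B|_C$ is a nested set of $B|_C$ (conditions (1) and (2) of Definition \ref{nested} are inherited), and conversely a disjoint union of nested sets of the $B|_C$ is a nested set of $B$: condition (1) between members of different components is automatic because distinct components are disjoint, and condition (2) cannot fail across components because every member of $B$ lies in a single component, so a union lying in $B$ would already lie in one $B|_C$. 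Granting this, a maximal nested set of $B$ has cardinality $\sum_{C\in B_{\max}}\bigl(|C|-1\bigr)=|S|-|B_{\max}|$ as soon as the connected case is known; so from now on assume $B_{\max}=\{S\}$ and aim for cardinality $|S|-1$.

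\emph{Induction on $|S|$.} If $|S|=1$ then $B=\{S\}=B_{\max}$ and the only nested set is $\emptyset$, of cardinality $0=|S|-1$. Let $|S|\ge 2$ and let $N$ be a maximal nested set, with inclusion-maximal elements $I_1,\dots,I_k$. By condition (1) these are pairwise disjoint, by $N\subseteq B\setminus\{S\}$ each is a proper subset of $S$, and their union is a proper subset of $S$ (clear if $k=1$; if $k\ge 2$ use condition (2) together with $S\in B$). Every member of $N$ lies in a unique $I_m$, so $N=\bigsqcup_{m=1}^{k}N_m$ with $N_m=\{J\in N:J\subseteq I_m\}$. I would then check that $N_m\setminus\{I_m\}$ is a \emph{maximal} nested set of the connected building set $B|_{I_m}$: nestedness is inherited, and maximality holds because any proper extension of it inside $B|_{I_m}$, adjoined to $N$, would still be nested in $B$ — condition (1) is immediate, and for condition (2) a violating family either lies entirely inside $I_m$ (impossible, since the extension is nested in $B|_{I_m}$) or, after replacing its members by the maximal elements of $N$ containing them, yields two disjoint members of $N$ whose union lies in $B$, contradicting condition (2) for $N$. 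The inductive hypothesis then gives $|N_m\setminus\{I_m\}|=|I_m|-1$, hence $|N|=\sum_{m=1}^{k}|I_m|=|I_1\cup\dots\cup I_k|$.

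\emph{The crux.} It remains to show that exactly one element of $S$ is missed by $I_1\cup\dots\cup I_k$; by the previous paragraph at least one element is missed, so the point is to rule out two missed elements $i\ne j$, and this is the main obstacle. The key gadget is that $B$ is closed under unions of intersecting members, so $X\cup I_m\in B$ whenever $X\in B$ meets $I_m$. Applying this repeatedly, pick $M\subseteq\{1,\dots,k\}$ \emph{maximal} such that $L:=\{i\}\cup\bigcup_{m\in M}I_m\in B$ (possible since $\{i\}\in B$). Then $j\notin L$ gives $L\ne S$, so $L\in B\setminus\{S\}$, and $L\notin N$ because $L$ contains the uncovered point $i$ while every member of $N$ sits inside some $I_m$. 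I claim $N\cup\{L\}$ is nested: each $J\in N$ lies in a unique $I_m$, with $J\subseteq L$ if $m\in M$ and $J\cap L=\emptyset$ if $m\notin M$, which gives condition (1); and any family $L,K_1,\dots,K_p$ (with $p\ge 1$ and $K_q\in N$) witnessing a failure of condition (2) would, after replacing each $K_q$ by the maximal element $I_{n(q)}$ of $N$ containing it, show that $M\cup\{n(1),\dots,n(p)\}$ is a strictly larger admissible set than $M$ (the indices $n(q)$ lie outside $M$ since $K_q$ is disjoint from $L$), contradicting the maximality of $M$. Hence $N\cup\{L\}\supsetneq N$ is nested, contradicting the maximality of $N$. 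This completes the induction, and together with the reduction it proves the proposition.
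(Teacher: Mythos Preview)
The paper does not actually prove this proposition: it is quoted verbatim from \cite[Proposition 4.1]{Zelevinsky} and used as a black box, so there is no in-paper argument to compare against. Your proof is therefore a genuine addition rather than a reproduction.

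Your argument is correct. The reduction to the connected case is routine and cleanly handled. In the inductive step the two nontrivial points are (a) that $N_m\setminus\{I_m\}$ is \emph{maximal} as a nested set of $B|_{I_m}$, and (b) that the maximal elements $I_1,\dots,I_k$ of $N$ miss exactly one point of $S$. For (a) your ``absorb into maximal $I_m$'' trick works: if a violating family $J,K_1,\dots,K_p$ is not entirely inside $I_m$, then repeatedly using $X\cap I_\ell\ne\emptyset\Rightarrow X\cup I_\ell\in B$ produces a union of at least two distinct $I_\ell$'s lying in $B$, contradicting condition (2) for $N$. For (b) your construction of $L=\{i\}\cup\bigcup_{m\in M}I_m$ with $M$ maximal is exactly the right idea; the maximality of $M$ is precisely what kills any would-be violation of condition (2) for $N\cup\{L\}$, and the uncovered point $j$ guarantees $L\ne S$. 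Both verifications are complete as written.

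As a minor stylistic remark: in the maximality argument for $N_m\setminus\{I_m\}$ you say the replacement ``yields two disjoint members of $N$ whose union lies in $B$''; strictly speaking it yields $\ge 2$ pairwise disjoint members among $I_m,I_{m(1)},\dots,I_{m(p)}$ whose union lies in $B$, which is what condition (2) forbids. This is clearly what you intend, and the argument goes through unchanged.
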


We are now ready to construct a toric variety from a building set.
First, suppose that $B$ is connected and $S=\{1, \ldots, n+1\}$.
We denote by $e_1, \ldots, e_n$ the standard basis for $\mathbb{R}^n$
and we put $e_{n+1}=-e_1-\cdots-e_n$.
For a nonempty subset $I$ of $S$, we denote $e_I=\sum_{i \in I}e_i$.
Note that $e_S=0$.
For $N \in \mathcal{N}(B) \setminus \{\emptyset\}$,
we denote by $\mathbb{R}_{\geq 0}N$
the $|N|$-dimensional cone $\sum_{I \in N}\mathbb{R}_{\geq 0}e_I$ in $\mathbb{R}^n$,
where $\mathbb{R}_{\geq 0}$ is the set of nonnegative real numbers,
and we define $\mathbb{R}_{\geq 0}\emptyset$ to be $\{0\} \subset \mathbb{R}^n$.
Then $\Delta(B)=\{\mathbb{R}_{\geq 0}N \mid N \in \mathcal{N}(B)\}$
forms a fan in $\mathbb{R}^n$
and thus we have an $n$-dimensional toric variety $X(\Delta(B))$.
If $B$ is not connected,
then we define $X(\Delta(B))=\prod_{C \in B_{\rm max}}X(\Delta(B|_C))$.

\begin{theorem}[{\cite[Corollary 5.2 and Theorem 6.1]{Zelevinsky}}]
Let $B$ be a building set.
Then the associated toric variety $X(\Delta(B))$ is nonsingular and projective.
\end{theorem}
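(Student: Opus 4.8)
\medskip

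\noindent\emph{Proof proposal.}
The plan is to reduce to the case of a connected building set, to establish nonsingularity by a direct unimodularity check on the maximal cones, and to obtain completeness and projectivity at once by realizing $\Delta(B)$ as the normal fan of a polytope. For the reduction, recall that $X(\Delta(B))=\prod_{C \in B_{\rm max}}X(\Delta(B|_C))$; each $C \in B_{\rm max}$ lies in $B$, so $B|_C$ is a \emph{connected} building set on $C$, and since a finite product of nonsingular projective varieties is again nonsingular and projective, it suffices to treat a connected $B$ on $S=\{1,\dots,n+1\}$.

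For nonsingularity, I would use that a simplicial fan defines a nonsingular toric variety exactly when the primitive ray generators of each of its cones extend to a $\mathbb{Z}$-basis of the lattice. The maximal cones of $\Delta(B)$ are the $\mathbb{R}_{\geq 0}N$ with $N$ a maximal nested set, and these are $n$-dimensional by Proposition \ref{pure}, so it is enough to show that $\{e_I : I \in N\}$ is a $\mathbb{Z}$-basis of $\mathbb{Z}^n$ for every maximal nested set $N$. Condition (1) of Definition \ref{nested} makes $\mathcal{L}:=N \cup \{S\}$ a laminar family, hence a rooted tree under reverse inclusion with root $S$. Assigning to each $I \in \mathcal{L}$ the set obtained by deleting from $I$ the union of its children in $\mathcal{L}$, one checks — using the maximality of $N$ together with condition (2) of Definition \ref{nested} — that these sets are nonempty; since they partition $S$ and $|\mathcal{L}|=|S|$, each is a singleton $\{v(I)\}$ and $I \mapsto v(I)$ is a bijection $\mathcal{L}\to S$. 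Then $e_I=\sum_{J \in \mathcal{L},\,J \subseteq I}e_{v(J)}$ for all $I \in \mathcal{L}$, a relation that is unitriangular with respect to the tree order, so $\{e_I : I \in N\}$ generates the same subgroup of $\mathbb{Z}^n$ as $\{e_{v(I)} : I \in N\}=\{e_i : i \in S\setminus\{v(S)\}\}$; and any $n$ of the vectors $e_1,\dots,e_{n+1}$ (recall $e_{n+1}=-e_1-\cdots-e_n$) form a $\mathbb{Z}$-basis of $\mathbb{Z}^n$.

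For completeness and projectivity, I would introduce the nestohedron $P_B=\sum_{I \in B}\Delta_I$ with $\Delta_I=\operatorname{conv}\{e_i : i \in I\}$, and show that, after the natural identification of the ambient space of its normal fan with $\mathbb{R}^n$, this normal fan is exactly $\Delta(B)$: the vertices of $P_B$ are indexed by the maximal nested sets of $B$, and the inner normal cone at the vertex corresponding to $N$ is $\mathbb{R}_{\geq 0}N$. Completeness of $\Delta(B)$ and projectivity of $X(\Delta(B))$ then follow because the normal fan of a full-dimensional polytope is complete and its associated toric variety is projective. (Alternatively, one could write down a strictly convex $\Delta(B)$-linear support function directly.)

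I expect the projectivity step to be the main obstacle. Matching the inner normal cones of $P_B$ with the cones $\mathbb{R}_{\geq 0}N$ — equivalently, checking strict convexity of the support function across every wall — amounts to understanding precisely how two maximal nested sets that agree in all but one element are related, that is, the wall-crossing combinatorics of the nested complex $\mathcal{N}(B)$. By contrast, the nonsingularity step is the clean tree argument above, and the fan axioms for $\Delta(B)$ can be read off from the same polytope.
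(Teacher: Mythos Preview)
The paper does not give its own proof of this statement---it is quoted as \cite[Corollary~5.2 and Theorem~6.1]{Zelevinsky} and used as a black box---so there is no in-paper argument to compare your proposal against.

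That said, your outline is correct and is essentially the standard route (and, given the title of \cite{Zelevinsky}, close to what is done there: the nested complex is realized as the normal fan of a polytope). Two small remarks on the details. First, for the nonemptiness of the pieces $I\setminus\bigcup(\text{children of }I\text{ in }\mathcal{L})$ you do not actually need maximality of $N$: if the children $C_1,\dots,C_k\in N$ covered $I$, then $k\geq 2$ would contradict condition~(2) of Definition~\ref{nested} (since $I\in B$), while $k\leq 1$ is impossible because each $C_j\subsetneq I$. Maximality enters only through Proposition~\ref{pure}, to get $|\mathcal{L}|=|S|$ and hence that each nonempty piece of the partition is a singleton. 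Second, your diagnosis that projectivity is the substantive step, and that it reduces to understanding how two adjacent maximal nested sets differ, is exactly right; note that Proposition~\ref{pair}, quoted later in this paper from the same source, records precisely this adjacency structure and is the combinatorial core of the strict-convexity verification you anticipate.
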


The following is our main result:

\begin{theorem}\label{main}
Let $B$ be a building set. Then the following are equivalent:
\begin{enumerate}
\item The associated toric variety $X(\Delta(B))$ is weak Fano.
\item For any $B$-component $C$ and
for any $I_1, I_2 \in B|_C$ such that $I_1 \cap I_2 \ne \emptyset,
I_1 \not\subset I_2$ and $I_2 \not\subset I_1$,
we have at least one of the following:
\begin{itemize}
\item[(i)] $I_1 \cap I_2 \in B|_C$.
\item[(ii)] $I_1 \cup I_2=C$ and $|(B|_{I_1 \cap I_2})_{\rm max}| \leq 2$.
\end{itemize}
\end{enumerate}
\end{theorem}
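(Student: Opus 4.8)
The plan is to translate Theorem~\ref{main} into a statement about the walls of $\Delta(B)$ and then to run a combinatorial case analysis. The first step is a chain of reductions. On a nonsingular complete toric variety the anticanonical divisor $-K=\sum_\rho D_\rho$ (the sum over the torus-invariant prime divisors) corresponds to the polytope $\{m:\langle m,u_\rho\rangle\geq -1\text{ for all }\rho\}$, which always contains the origin in its interior and is therefore full-dimensional; hence if $-K$ is nef it is automatically big, so $X(\Delta(B))$ is weak Fano if and only if $-K$ is nef. Since $-K_{X(\Delta(B))}$ is the sum of the pullbacks of the $-K_{X(\Delta(B|_C))}$ under the projections, it is nef if and only if each $-K_{X(\Delta(B|_C))}$ is nef, and condition (2) is stated component by component; so I may assume $B$ is connected, say on $S=\{1,\dots,n+1\}$. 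Finally, nefness of a Cartier divisor on a nonsingular complete toric variety is tested on torus-invariant curves, so the task is to decide for which connected $B$ one has $(-K\cdot V(\tau))\geq 0$ for every wall $\tau$ of $\Delta(B)$, where $V(\tau)$ is the corresponding torus-invariant curve.

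By Proposition~\ref{pure} a wall of $\Delta(B)$ is $\mathbb{R}_{\geq 0}N'$ for a nested set $N'$ of cardinality $n-1$, and such an $N'$ lies in exactly two maximal nested sets $N'\cup\{I\}$ and $N'\cup\{I'\}$ with $I\ne I'$. I would first record that $I$ and $I'$ must be incomparable: if $I\subset I'$ (say), then $N'\cup\{I,I'\}$ still satisfies both conditions of Definition~\ref{nested} --- the only conceivable new violation of condition (2) would use a pairwise disjoint subfamily containing both $I$ and $I'$, which is impossible --- contradicting Proposition~\ref{pure}. Since the fan is nonsingular, the wall relation reads $e_I+e_{I'}=\sum_{K\in N'}b_Ke_K$ with the coefficients of $e_I$ and $e_{I'}$ equal to $1$, so that $(-K\cdot V(\tau))=2-\sum_{K\in N'}b_K$. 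The identity $e_I+e_{I'}=e_{I\cup I'}+e_{I\cap I'}$ (recall $e_S=0$) is what makes this computable from $B$: the heart of the matter is a lemma asserting that when $I\cap I'\ne\emptyset$ the nested set $N'$ is forced to contain all $B$-components $K_1,\dots,K_m$ of $B|_{I\cap I'}$ and, when $I\cup I'\ne S$, also $I\cup I'$; by linear independence of $\{e_K:K\in N'\}$ the wall relation is then exactly $e_I+e_{I'}=e_{I\cup I'}+\sum_{j=1}^m e_{K_j}$ (with the term $e_{I\cup I'}$ absent, equal to $0$, when $I\cup I'=S$), whence, with $m=|(B|_{I\cap I'})_{\rm max}|$,
\[
(-K\cdot V(\tau))=
\begin{cases}
2-m & \text{if }I\cup I'=S,\\
1-m & \text{if }I\cup I'\subsetneq S,
\end{cases}
\]
where in the second case building set axiom (1) guarantees $I\cup I'\in B\setminus B_{\rm max}$. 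For $I\cap I'=\emptyset$ a simpler analysis gives $(-K\cdot V(\tau))\geq 1$.

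Granting this, the equivalence falls out by comparison with Definition~\ref{nested}. Since $m=1$ precisely when $I\cap I'\in B$, the displayed number is nonnegative exactly when $I\cap I'\in B$, or $I\cup I'=S$ and $m\leq 2$ --- that is, exactly when $\{I,I'\}$ satisfies alternative (i) or (ii) of condition (2). For (2)$\Rightarrow$(1): a wall with $I\cap I'=\emptyset$ is automatically nonnegative, and a wall with $I\cap I'\ne\emptyset$ involves an incomparable overlapping pair $\{I,I'\}$ in $B|_S$, to which (2) applies and forces $(-K\cdot V(\tau))\geq 0$. For (1)$\Rightarrow$(2) I argue contrapositively: given a $B$-component $C$ and an incomparable overlapping pair $I_1,I_2\in B|_C$ failing both (i) and (ii), I would exhibit a wall of $\Delta(B|_C)$ --- hence of $\Delta(B)$, with the same anticanonical intersection number --- whose exchange pair is $\{I_1,I_2\}$: start from the $B|_C$-components of $B|_{I_1\cap I_2}$, adjoin $I_1\cup I_2$ when it is a proper subset of $C$, and complete to a nested set of cardinality $|C|-2$ using only subsets compatible with both $I_1$ and $I_2$, the existence of a completion of the right size following from Proposition~\ref{pure} applied to suitable restrictions. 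The displayed formula then yields $(-K\cdot V(\tau))<0$.

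The main obstacle is the combinatorial lemma underlying the formula: proving that in an overlapping exchange $N'$ necessarily contains the $B$-components of $B|_{I\cap I'}$ and $I\cup I'$ (when proper), and no other element contributing to $\sum_{K\in N'}b_K$; and, in the converse direction, that every ``bad'' incomparable overlapping pair is realized as the exchange pair of some wall. Both rest on a careful analysis of the two conditions of Definition~\ref{nested} together with the maximality and completeness properties of nested sets encoded in Proposition~\ref{pure}. By contrast, the reductions of the first paragraph, the incomparability of $\{I,I'\}$, the normalization of the wall relation, and the disjoint case are routine.
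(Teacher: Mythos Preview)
Your reduction to the connected case and your treatment of the direction $(2)\Rightarrow(1)$ are essentially the paper's argument (modulo one inaccuracy noted below). The real problem is the converse. You assert that a pair $I_1,I_2$ violating both (i) and (ii) can always be realized as the exchange pair of some wall, i.e.\ that there exists a nested set $N$ of size $n-1$ with $N\cup\{I_1\}$ and $N\cup\{I_2\}$ both maximal. This is false. Take $S=\{1,\dots,7\}$ and let $B$ consist of the singletons together with
\[
I_1=\{1,2,3,4,5\},\quad I_2=\{3,4,5,6,7\},\quad S,
\]
and the four sets $\{1,2,3,4,5,6\},\{1,2,3,4,5,7\},\{1,3,4,5,6,7\},\{2,3,4,5,6,7\}$. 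One checks directly that $B$ is a building set. The pair $(I_1,I_2)$ fails (i) since $\{3,4,5\}\notin B$, and fails (ii) since $|(B|_{\{3,4,5\}})_{\rm max}|=3$. Now any $J\in N$ would have to be nested-compatible with both $I_1$ and $I_2$; in particular if $J\subset I_1\setminus I_2=\{1,2\}$ then $J$ is disjoint from $I_2$ and $J\cup I_2\in B$ by construction, violating condition (2) of Definition~\ref{nested} for $N\cup\{I_2\}$, and symmetrically for $J\subset I_2\setminus I_1$. The only surviving candidates are $\{3\},\{4\},\{5\}$, so $|N|\le 3<5$. Hence no such wall exists, and your construction cannot produce one. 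The paper confronts exactly this obstruction: Lemmas~\ref{lema}, \ref{lemb} and \ref{lemc} carry out an induction on $|I_1\triangle I_2|$ which, when the naive nested set fails, replaces $(I_1,I_2)$ by a \emph{different} overlapping pair $(J_1,J_2)$ with smaller symmetric difference (and with $J_1\cup J_2\subsetneq S$ or $|(B|_{J_1\cap J_2})_{\rm max}|\ge 3$ preserved), eventually reaching a pair that does extend to a wall. This replacement step is the missing idea in your plan.

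A smaller point: your ``combinatorial lemma'' also claims that whenever $I\cup I'\ne S$ one has $I\cup I'\in N'$, and hence the exact formula $(-K\cdot V(\tau))=1-m$. This equality is not correct in general; the paper only uses Proposition~\ref{pair}(3), which yields $I_3,\dots,I_k\in N'$ disjoint from $I\cup I'$ with $I\cup I'\cup I_3\cup\cdots\cup I_k\in N'\cup\{S\}$, and the resulting intersection number is $k-m$ or $k-m-1$. Since $k\ge 2$, this is always at least your value, so the implication $(2)\Rightarrow(1)$ survives; but you should not present the displayed formula as an identity.
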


\begin{remark}
In a previous paper \cite{Suyama1604},
we proved that the toric variety associated to a {\it graphical} building set is weak Fano
if and only if every connected component of the graph does not have
a cycle graph of length $\geq 4$ or a diamond graph as a proper induced subgraph.
However, it is unclear whether this result can be obtained from Theorem \ref{main}.
\end{remark}

\begin{example}
Theorem \ref{main} implies that if $|S| \leq 4$,
then the toric variety $X(\Delta(B))$ is weak Fano
for any connected building set $B$ on $S$.
Any building set is a disjoint union of connected building sets,
and the disjoint union corresponds to
the product of toric varieties associated to the connected building sets.
Since the product of toric weak Fano varieties is also weak Fano,
it follows that all toric varieties of dimension $\leq 3$
associated to building sets are weak Fano.
\end{example}

We recall a description of the intersection number of the anticanonical divisor
with a torus-invariant curve, see \cite{Oda} for details.
Let $\Delta$ be a nonsingular complete fan in $\mathbb{R}^n$
and let $X(\Delta)$ be the associated toric variety.
For $0 \leq r \leq n$,
we denote by $\Delta(r)$ the set of $r$-dimensional cones in $\Delta$.
For $\tau \in \Delta(n-1)$,
the intersection number of the anticanonical divisor $-K_{X(\Delta)}$ with
the torus-invariant curve $V(\tau)$ corresponding to $\tau$
can be computed as follows:

\begin{proposition}\label{intersection number}
Let $\Delta$ be a nonsingular complete fan in $\mathbb{R}^n$
and $\tau=\mathbb{R}_{\geq 0}v_1+\cdots+\mathbb{R}_{\geq 0}v_{n-1} \in \Delta(n-1)$,
where $v_1, \ldots, v_{n-1}$ are primitive vectors in $\mathbb{Z}^n$.
Let $v$ and $v'$ be the distinct primitive vectors in $\mathbb{Z}^n$ such that
$\tau+\mathbb{R}_{\geq 0}v$ and $\tau+\mathbb{R}_{\geq 0}v'$ are in $\Delta(n)$.
Then there exist unique integers $a_1, \ldots, a_{n-1}$ such that
$v+v'+a_1v_1+\cdots+a_{n-1}v_{n-1}=0$.
Furthermore, the intersection number $(-K_{X(\Delta)}.V(\tau))$ is equal to
$2+a_1+\cdots+a_{n-1}$.
\end{proposition}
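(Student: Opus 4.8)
For the statement that ends the excerpt (the wall/intersection‑number formula), here is how I would proceed.

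The statement has two parts -- the existence and uniqueness of the integers $a_i$, and the value of the intersection number -- and I would treat them in turn. For the first part the plan is a lattice computation that uses nonsingularity twice. The cone $\sigma=\tau+\mathbb{R}_{\ge 0}v$ belongs to $\Delta(n)$, so by nonsingularity its primitive ray generators $v_1,\dots,v_{n-1},v$ form a $\mathbb{Z}$-basis of $\mathbb{Z}^n$; write $v'=\sum_{i=1}^{n-1}b_iv_i+cv$ with unique $b_i,c\in\mathbb{Z}$. I would then show $c=-1$ as follows. Choose $m$ in the dual lattice (tensored with $\mathbb{R}$) with $\langle m,v_i\rangle=0$ for all $i$ and $\langle m,v\rangle=1$; since $\sigma$ and $\sigma'=\tau+\mathbb{R}_{\ge 0}v'$ are distinct cones of $\Delta$ with $\sigma\cap\sigma'=\tau\subset\ker m$, the generator $v'$ must lie in the open half-space $\langle m,\,\cdot\,\rangle<0$ (otherwise $\sigma$ and $\sigma'$ would overlap in a neighborhood of the relative interior of $\tau$), so $c=\langle m,v'\rangle<0$. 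On the other hand $v_1,\dots,v_{n-1},v'$ is again a $\mathbb{Z}$-basis because $\sigma'$ is nonsingular, so the matrix expressing it in terms of $v_1,\dots,v_{n-1},v$ -- triangular with diagonal entries $1,\dots,1,c$ -- has determinant $\pm1$, forcing $c=\pm1$. Hence $c=-1$, whence $v+v'=\sum_ib_iv_i$; setting $a_i:=-b_i$ gives the relation $v+v'+\sum_ia_iv_i=0$, and uniqueness is immediate from the linear independence of $v_1,\dots,v_{n-1}$.

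For the second part I would expand $-K_{X(\Delta)}=\sum_{\rho\in\Delta(1)}D_\rho$ and use bilinearity, $(-K_{X(\Delta)}.V(\tau))=\sum_{\rho\in\Delta(1)}(D_\rho.V(\tau))$, together with the standard description of how torus-invariant prime divisors meet a torus-invariant curve on a nonsingular complete toric variety (see \cite{Oda}): $(D_\rho.V(\tau))=0$ for every ray $\rho\notin\{v_1,\dots,v_{n-1},v,v'\}$; $(D_v.V(\tau))=(D_{v'}.V(\tau))=1$, each being a transversal intersection with the single torus-fixed point $V(\sigma)$ respectively $V(\sigma')$; and $(D_{v_i}.V(\tau))=a_i$ for $i=1,\dots,n-1$, with $a_i$ the coefficients produced above. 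Summing the contributions yields $(-K_{X(\Delta)}.V(\tau))=1+1+\sum_{i=1}^{n-1}a_i=2+a_1+\dots+a_{n-1}$. One can also phrase the final step by adjunction on $V(\tau)\cong\mathbb{P}^1$: the normal bundle of $V(\tau)$ in $X(\Delta)$ is $\bigoplus_{i=1}^{n-1}\mathcal{O}_{\mathbb{P}^1}(a_i)$, so $-K_{X(\Delta)}.V(\tau)=\deg(-K_{\mathbb{P}^1})+\deg N_{V(\tau)/X(\Delta)}=2+\sum_ia_i$.

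I do not expect a serious obstacle: the proof is essentially the toric intersection-theory dictionary together with the lattice lemma above, and the one point that genuinely needs care is the sign $c=-1$, which uses the nonsingularity of \emph{both} adjacent maximal cones. If one prefers to establish the identity $(D_{v_i}.V(\tau))=a_i$ from first principles rather than cite it, the delicate computation is to trivialize $\mathcal{O}_{X(\Delta)}(D_{v_i})$ over the two affine charts $U_\sigma$ and $U_{\sigma'}$ that cover $V(\tau)\cong\mathbb{P}^1$ and to verify that the degree of the resulting line bundle on $\mathbb{P}^1$ equals the coefficient of $v_i$ in the wall relation $v+v'+\sum_ja_jv_j=0$.
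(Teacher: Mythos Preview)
Your argument is correct and is the standard textbook derivation of the wall relation and the intersection formula. Note, however, that the paper does not actually prove this proposition: it is stated as a recalled fact with a reference to \cite{Oda} (``see \cite{Oda} for details''), so there is no proof in the paper to compare against. Your write-up supplies exactly the proof one would extract from Oda's treatment---the lattice computation showing $c=-1$ via nonsingularity of both adjacent maximal cones, followed by the expansion $-K_{X(\Delta)}=\sum_\rho D_\rho$ and the standard values $(D_\rho.V(\tau))$---so in that sense your approach and the paper's (implicit) approach coincide.
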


\begin{proposition}[{\cite[Proposition 6.17]{Sato00}}]\label{weak Fano}
Let $X(\Delta)$ be an $n$-dimensional nonsingular projective toric variety.
Then $X(\Delta)$ is weak Fano if and only if
$(-K_{X(\Delta)}.V(\tau))$ is nonnegative for every
$(n-1)$-dimensional cone $\tau$ in $\Delta$.
\end{proposition}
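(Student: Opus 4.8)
The plan is to split the condition ``weak Fano,'' namely that $-K_{X(\Delta)}$ be nef and big, into its two halves and treat each by standard toric considerations. Throughout one uses that on the nonsingular complete toric variety $X(\Delta)$ the anticanonical divisor is the Cartier divisor $-K_{X(\Delta)}=\sum_{\rho\in\Delta(1)}D_{\rho}$, the sum of all torus-invariant prime divisors, and that for $\tau\in\Delta(n-1)$ the intersection number $(-K_{X(\Delta)}.V(\tau))$ is the one described in Proposition~\ref{intersection number}.

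For the implication from nonnegativity of all the numbers $(-K_{X(\Delta)}.V(\tau))$ to weak Fano, I would argue as follows. By the toric cone theorem (see \cite{Oda}), the Mori cone $\overline{NE}(X(\Delta))$ of the projective toric variety $X(\Delta)$ is generated as a cone by the classes of the torus-invariant curves $V(\tau)$ with $\tau\in\Delta(n-1)$. Hence if $(-K_{X(\Delta)}.V(\tau))\ge 0$ for every such $\tau$, then $-K_{X(\Delta)}$ is nonnegative on all of $\overline{NE}(X(\Delta))$, so by Kleiman's criterion it is nef. To see that it is also big, consider the polytope $P=\{m\in(\mathbb{R}^{n})^{*}\mid\langle m,v_{\rho}\rangle\ge -1\text{ for every }\rho\in\Delta(1)\}$ attached to $-K_{X(\Delta)}$, where $v_{\rho}$ is the primitive generator of $\rho$. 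Since $\langle 0,v_{\rho}\rangle=0>-1$, the origin lies in the interior of $P$, so $P$ is $n$-dimensional; because $-K_{X(\Delta)}$ is nef this yields $(-K_{X(\Delta)})^{n}=n!\,\mathrm{vol}(P)>0$, and a nef divisor with positive top self-intersection on a projective variety is big. Thus $-K_{X(\Delta)}$ is nef and big, so $X(\Delta)$ is weak Fano. The converse is immediate: if $X(\Delta)$ is weak Fano then $-K_{X(\Delta)}$ is in particular nef, hence nonnegative on $\overline{NE}(X(\Delta))$, which contains every class $[V(\tau)]$, giving $(-K_{X(\Delta)}.V(\tau))\ge 0$ for all $\tau\in\Delta(n-1)$.

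The substantive inputs are exactly the toric cone theorem and the fact that for the anticanonical class bigness is automatic once nefness is known; the latter uses only that the defining inequalities $\langle m,v_{\rho}\rangle\ge -1$ put the origin in the interior of the polytope. The one point I would state carefully rather than gloss over is precisely this: the analogous reduction from ``nef and big'' to ``nef'' fails for a general nef divisor, so it must be invoked specifically for $-K_{X(\Delta)}$, and it is what allows the single system of inequalities on the walls $\tau\in\Delta(n-1)$ to capture the full ``nef and big'' condition.
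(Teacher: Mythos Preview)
The paper does not supply its own proof of this proposition; it is quoted verbatim as \cite[Proposition~6.17]{Sato00} and used as a black box. Your argument is correct and is essentially the standard one: the toric cone theorem reduces nefness to nonnegativity on the torus-invariant curves $V(\tau)$, and bigness of a nef $-K_{X(\Delta)}$ follows because the anticanonical polytope contains the origin in its interior, forcing $(-K_{X(\Delta)})^{n}=n!\,\mathrm{vol}(P)>0$. Your closing remark that the passage from ``nef'' to ``nef and big'' is special to the anticanonical divisor is exactly the point, and is worth keeping.
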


\begin{example}
Let $S=\{1, 2, 3, 4, 5\}$ and
\begin{equation*}
B=\{\{1\}, \{2\}, \{3\}, \{4\}, \{5\}, \{1, 2, 3, 4\}, \{2, 3, 4, 5\}, \{1, 2, 3, 4, 5\}\}.
\end{equation*}
Then the nested complex $\mathcal{N}(B)$ consists of
\begin{align*}
&\{\{1\}, \{2\}, \{3\}, \{1, 2, 3, 4\}\}, \{\{1\}, \{2\}, \{4\}, \{1, 2, 3, 4\}\},\\
&\{\{1\}, \{3\}, \{4\}, \{1, 2, 3, 4\}\}, \{\{2\}, \{3\}, \{4\}, \{1, 2, 3, 4\}\},\\
&\{\{2\}, \{3\}, \{4\}, \{2, 3, 4, 5\}\}, \{\{2\}, \{3\}, \{5\}, \{2, 3, 4, 5\}\},\\
&\{\{2\}, \{4\}, \{5\}, \{2, 3, 4, 5\}\}, \{\{3\}, \{4\}, \{5\}, \{2, 3, 4, 5\}\},\\
&\{\{1\}, \{2\}, \{3\}, \{5\}\}, \{\{1\}, \{2\}, \{4\}, \{5\}\}, \{\{1\}, \{3\}, \{4\}, \{5\}\}
\end{align*}
and their subsets.
The pair $I_1=\{1, 2, 3, 4\}$ and $I_2=\{2, 3, 4, 5\}$
does not satisfy the condition (2) in Theorem \ref{main}.
Hence the 4-dimensional toric variety $X(\Delta(B))$ is not weak Fano.
In fact, there exists a 3-dimensional cone $\tau$ in $\Delta(B)$ such that
$(-K_{X(\Delta(B))}.V(\tau)) \leq -1$. Let
\begin{equation*}
N_1=\{\{2\}, \{3\}, \{4\}, \{1, 2, 3, 4\}\},\quad
N_2=\{\{2\}, \{3\}, \{4\}, \{2, 3, 4, 5\}\}.
\end{equation*}
Then we have
\begin{align*}
\mathbb{R}_{\geq 0}N_1&=\mathbb{R}_{\geq 0}e_2+\mathbb{R}_{\geq 0}e_3
+\mathbb{R}_{\geq 0}e_4+\mathbb{R}_{\geq 0}(e_1+e_2+e_3+e_4),\\
\mathbb{R}_{\geq 0}N_2&=\mathbb{R}_{\geq 0}e_2+\mathbb{R}_{\geq 0}e_3
+\mathbb{R}_{\geq 0}e_4+\mathbb{R}_{\geq 0}(-e_1).
\end{align*}
Let us consider $\tau=\mathbb{R}_{\geq 0}N_1 \cap \mathbb{R}_{\geq 0}N_2
=\mathbb{R}_{\geq 0}e_2+\mathbb{R}_{\geq 0}e_3+\mathbb{R}_{\geq 0}e_4$.
Since $(e_1+e_2+e_3+e_4)+(-e_1)-e_2-e_3-e_4=0$,
Proposition \ref{intersection number} gives $(-K_{X(\Delta(B))}.V(\tau))=2-3=-1$.
Therefore $X(\Delta(B))$ is not weak Fano by Proposition \ref{weak Fano}.
\end{example}

\section{Reflexive polytopes associated to building sets}

An $n$-dimensional integral convex polytope
$P \subset \mathbb{R}^n$ is said to be {\it reflexive}
if $0$ is in the interior of $P$ and the dual $P^*=\{u \in \mathbb{R}^n \mid
\langle u, v\rangle \geq -1 \mbox{ for any } v \in P\}$
is also an integral convex polytope,
where $\langle \cdot, \cdot\rangle$ denotes
the standard inner product in $\mathbb{R}^n$.
Let $\Delta$ be a nonsingular complete fan in $\mathbb{R}^n$.
If the associated toric variety $X(\Delta)$ is weak Fano,
then the convex hull of primitive generators of rays in $\Delta(1)$ is a reflexive polytope.
For a building set $B$ such that the associated toric variety $X(\Delta(B))$
is weak Fano, we denote by $P_B$ the corresponding reflexive polytope.

Higashitani \cite{Higashitani} gave a construction
of integral convex polytopes from finite directed graphs
(with no loops and no multiple arrows).
We describe his construction briefly.
Let $G$ be a finite directed graph
whose node set is $V(G)=\{1, \ldots, n+1\}$
and whose arrow set is $A(G) \subset V(G) \times V(G)$.
For $\overrightarrow{e}=(i, j) \in A(G)$,
we define $\rho(\overrightarrow{e}) \in \mathbb{R}^{n+1}$ to be $e_i-e_j$.
We define $P_G$ to be the convex hull of $\{\rho(\overrightarrow{e}) \mid
\overrightarrow{e} \in A(G)\}$ in $\mathbb{R}^{n+1}$.
$P_G$ is an integral convex polytope in the hyperplane
$H=\{(x_1, \ldots, x_{n+1}) \in \mathbb{R}^{n+1} \mid x_1+\cdots+x_{n+1}=0\}$.
In a previous paper we proved that if $X(\Delta(B))$ is Fano,
then $P_B$ can be obtained from a finite directed graph:

\begin{theorem}[{\cite[Theorem 4.1]{Suyama1611}}]
Let $B$ be a building set.
If the associated toric variety $X(\Delta(B))$ is Fano,
then there exists a finite directed graph $G$ such that $P_B$
is equivalent to $P_G$, that is,
there exists a linear isomorphism $f:\mathbb{R}^n \rightarrow H$
such that $f(\mathbb{Z}^n)=H \cap \mathbb{Z}^{n+1}$ and $f(P_B)=P_G$.
\end{theorem}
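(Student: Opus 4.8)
The plan is to reduce to a connected building set and then exploit the structure of connected Fano building sets to realize $P_B$ as a ``telescoping'' directed-graph polytope. For the reduction, note that $B=\bigsqcup_{C\in B_{\rm max}}B|_C$ gives $X(\Delta(B))=\prod_{C}X(\Delta(B|_C))$, so $X(\Delta(B))$ is Fano iff each $X(\Delta(B|_C))$ is Fano; moreover $\Delta(B)$ is a product of the fans $\Delta(B|_C)$, so its primitive ray generators are obtained from those of the factors by inserting zeros, whence $P_B=\bigoplus_C P_{B|_C}$ is a free sum. If each $P_{B|_C}$ is equivalent to a graph polytope $P_{G_C}$ with $G_C$ on $|C|$ (suitably renamed) nodes, then identifying one node of each $G_C$ to a single common node yields a directed graph $G$ on $\sum_C(|C|-1)+1=n+1$ nodes with $P_G=\bigoplus_C P_{G_C}$; here one checks that gluing two graphs along a node realizes the free sum of their polytopes and is compatible with the lattices $H\cap\mathbb{Z}^{\bullet}$ (root lattices of type $A$). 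So it suffices to treat a connected $B$ on $S=\{1,\ldots,n+1\}$.

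For such a $B$, I would invoke the classification of connected Fano building sets from \cite{Suyama1611} --- which comes from Propositions \ref{intersection number} and \ref{weak Fano} together with the combinatorial description of the flips of $\Delta(B)$ --- to conclude that there is a \emph{cyclic} order $s_1,s_2,\ldots,s_{n+1},s_1,\ldots$ on $S$ with respect to which every $I\in B$ is a cyclic interval $\{s_i,s_{i+1},\ldots,s_j\}$ (indices read modulo $n+1$). After relabeling so that the cyclic order is $1,2,\ldots,n+1$, write $\tilde e_1,\ldots,\tilde e_{n+1}$ for the standard basis of $\mathbb{R}^{n+1}$ and let $f\colon\mathbb{R}^n\to H$ be the linear map with $f(e_k)=\tilde e_k-\tilde e_{k+1}$ for $k=1,\ldots,n$ (indices modulo $n+1$). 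Since $e_1,\ldots,e_n$ is a $\mathbb{Z}$-basis of $\mathbb{Z}^n$ and $\tilde e_1-\tilde e_2,\ldots,\tilde e_n-\tilde e_{n+1}$ is a $\mathbb{Z}$-basis of $H\cap\mathbb{Z}^{n+1}$, $f$ is a linear isomorphism with $f(\mathbb{Z}^n)=H\cap\mathbb{Z}^{n+1}$; and $f(e_{n+1})=f(-e_1-\cdots-e_n)=-(\tilde e_1-\tilde e_{n+1})=\tilde e_{n+1}-\tilde e_1$, so $f(e_k)=\tilde e_k-\tilde e_{k+1}$ holds for every index modulo $n+1$.

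The key point is then a telescoping computation: for a proper cyclic interval $I=\{s_i,\ldots,s_j\}\subsetneq S$ one gets $f(e_I)=\sum_k(\tilde e_{s_k}-\tilde e_{s_{k+1}})=\tilde e_{s_i}-\tilde e_{s_{j+1}}$ (using $e_S=0$ to handle the wrap-around case), and $s_i\ne s_{j+1}$ precisely because $I\ne S$. Define $G$ to be the directed graph on $\{1,\ldots,n+1\}$ with arrow set $A(G)=\{(s_i,s_{j+1})\mid I=\{s_i,\ldots,s_j\}\in B\setminus B_{\rm max}\}$; this is a legitimate arrow set --- no loops, no multiple arrows --- since $I\mapsto(\mathrm{start}(I),\mathrm{end}(I)+1)$ is injective on proper cyclic intervals and never produces a pair with equal entries. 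By construction the vectors $f(e_I)$, $I\in B\setminus B_{\rm max}$, are exactly the vectors $\rho(\overrightarrow{e})$, $\overrightarrow{e}\in A(G)$. As $X(\Delta(B))$ is Fano it is weak Fano, so by the definition of $P_B$ recalled above $P_B=\mathrm{conv}\{e_I\mid I\in B\setminus B_{\rm max}\}$; applying $f$ gives $f(P_B)=\mathrm{conv}\{\rho(\overrightarrow{e})\mid\overrightarrow{e}\in A(G)\}=P_G$, which is the required equivalence.

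The step I expect to be the main obstacle is the extraction of the cyclic-interval structure from the Fano hypothesis: the Fano conditions on $B$ are local (conditions on pairs $I_1,I_2$, of the kind in Theorem \ref{main} but sharper), whereas ``$I$ is a cyclic interval for one fixed circular order of $S$'' is a single global statement, so the work is to show that the local conditions force a consistent cyclic order of $S$ in which all members of $B$ are intervals. Everything after that is the routine telescoping above --- which in fact shows that $P_B$ is equivalent to $P_G$ for \emph{any} cyclic-interval building set, weak Fano-ness being used only to identify $P_B$ with the convex hull of the primitive ray generators of $\Delta(B)$.
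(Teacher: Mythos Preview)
This theorem is not proved in the present paper at all: it is quoted verbatim from \cite[Theorem~4.1]{Suyama1611} and no argument is given here, so there is no ``paper's own proof'' of this particular statement to compare your proposal against.

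On the merits of your outline: the reduction to the connected case is sound, and the observation that identifying one node from each $G_C$ realizes the free sum $\bigoplus_C P_{G_C}$ as a single $P_G$ is correct once one checks that the two summand hyperplanes meet only at the origin inside $H$. The telescoping computation is also correct and, as you note, works for \emph{any} connected building set all of whose members are cyclic intervals for a fixed cyclic order on $S$; the map $e_k\mapsto\tilde e_k-\tilde e_{k+1}$ then sends each $e_I$ to $\rho$ of an arrow, and the resulting $G$ has no loops or multiple arrows for the reason you give.

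The real issue is precisely the step you flag as the main obstacle. You write that you ``would invoke the classification of connected Fano building sets from \cite{Suyama1611}'' to obtain the cyclic-interval structure, but the Fano characterization in that paper is stated as a local condition on pairs $I_1,I_2\in B$ (the sharpening of Theorem~\ref{main}: the intersection lies in $B$ and the union is the ambient $B$-component), not as a global cyclic-order statement. Passing from that pairwise condition to the existence of a single cyclic order on $S$ making every element of $B$ an interval is itself a nontrivial combinatorial argument --- indeed, it is essentially the substance of the theorem you are trying to prove, and in \cite{Suyama1611} the graph $G$ is produced hand in hand with that ordering rather than after the fact. So as written your proposal is a correct scaffold with the load-bearing step deferred; the telescoping part you carry out would in any reasonable write-up be the short final paragraph, not the proof.
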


However, there exist infinitely many reflexive polytopes associated to building sets
that cannot be obtained from finite directed graphs.
The following proposition provides such examples:

\begin{proposition}
Let $S=\{1, \ldots, n+1\}$ and $B=2^S \setminus \{\emptyset\}$.
Then $X(\Delta(B))$ is weak Fano by Theorem \ref{main}
but the reflexive polytope $P_B$
cannot be obtained from any finite directed graph for $n \geq 3$.
\end{proposition}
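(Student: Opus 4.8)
\textit{Proof plan.}
The plan is to count vertices. First I would dispose of the weak Fano assertion: for $B=2^{S}\setminus\{\emptyset\}$ and any $I_{1},I_{2}\in B$ with $I_{1}\cap I_{2}\ne\emptyset$, the set $I_{1}\cap I_{2}$ is a nonempty subset of $S$ and hence already lies in $B$, so alternative (i) in Theorem~\ref{main}(2) always holds and $X(\Delta(B))$ is weak Fano. Thus $P_{B}$ is defined and equals the convex hull of the primitive generators of the rays of $\Delta(B)$. The rays correspond to the singleton nested sets $\{I\}$ with $I\in B\setminus B_{\mathrm{max}}$, i.e.\ to the proper nonempty subsets $I\subsetneq S$, and the primitive generator of $\mathbb{R}_{\ge 0}e_{I}$ is $e_{I}$. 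Writing $[n]=\{1,\ldots,n\}$ and $e_{T}=\sum_{i\in T}e_{i}$, a short check in the basis $e_{1},\ldots,e_{n}$ (using $e_{n+1}=-e_{1}-\cdots-e_{n}$) shows that, as $I$ ranges over the proper nonempty subsets of $S$, the vectors $e_{I}$ are exactly the $2^{n+1}-2$ distinct primitive vectors $\{e_{T}:\emptyset\ne T\subseteq[n]\}\cup\{-e_{T}:\emptyset\ne T\subseteq[n]\}$; in particular $P_{B}$ is centrally symmetric.

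The main step — and the one place where being merely weak Fano rather than Fano makes the statement require an argument — is to check that every one of these $2^{n+1}-2$ generators is an actual vertex of $P_{B}$, not a point in the relative interior of a higher-dimensional face. For a fixed nonempty $T\subseteq[n]$ I would exhibit a linear functional maximized on $P_{B}$ uniquely at $e_{T}$: take $\varphi(x)=\sum_{i\in T}x_{i}-\varepsilon\sum_{i\in[n]\setminus T}x_{i}$ with $0<\varepsilon<1/n$. Then $\varphi(e_{T'})=|T\cap T'|-\varepsilon|T'\setminus T|\le|T|$ with equality only when $T'=T$, while $\varphi(-e_{T'})=-|T\cap T'|+\varepsilon|T'\setminus T|\le\varepsilon(n-|T|)<\varepsilon n<1\le|T|$; hence $e_{T}$ is a vertex, and by central symmetry $-e_{T}$ is a vertex as well. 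Therefore $P_{B}$ has exactly $2^{n+1}-2$ vertices.

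Finally I would compare with the directed-graph side. For a finite directed graph $G$ on $n+1$ nodes with no loops and no multiple arrows, the vertices of $P_{G}$ lie in $\{\rho(\overrightarrow{e}):\overrightarrow{e}\in A(G)\}$, so $P_{G}$ has at most $|A(G)|\le(n+1)n$ vertices. Since the equivalence in the definition of "obtained from a finite directed graph" is a linear isomorphism of the ambient spaces, it restricts to an affine bijection $P_{B}\to P_{G}$ and carries vertices to vertices; so $P_{B}$ equivalent to some $P_{G}$ would force $2^{n+1}-2\le n^{2}+n$. This fails already at $n=3$ (as $14>12$), and a one-line induction gives $2^{n+1}-2>n^{2}+n$ for all $n\ge 3$, completing the proof. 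The only delicate point is the vertex count of $P_{B}$; the rest is bookkeeping.
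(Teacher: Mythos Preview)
Your proof is correct and follows the same vertex-counting strategy as the paper: both arguments conclude by comparing the $2^{n+1}-2$ vertices of $P_B$ against the at most $n(n+1)$ arrows of $G$. The details you fill in, however, are complementary to the paper's. You give a careful linear-functional argument showing that every one of the $2^{n+1}-2$ primitive generators is genuinely a vertex of $P_B$ (a real issue in the merely weak Fano case, as you note); the paper simply asserts this count. Conversely, the paper does not take $|V(G)|=n+1$ for granted: it argues that $0\in P_G$ forces $G$ to contain a directed cycle, and then invokes Higashitani's result that a nonhomogeneous cycle implies $\dim P_G=|V(G)|-1$, whence $|V(G)|=n+1$. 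You instead read this off from the definition of equivalence as a linear isomorphism $\mathbb{R}^n\to H$, which already pins down $\dim H=n$. Both routes are valid; the paper's is more robust against weaker notions of lattice equivalence, while yours makes the vertex count of $P_B$ rigorous.
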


\begin{proof}
Suppose that there exists a finite directed graph $G$
such that $P_B$ is equivalent to $P_G$.
Since $0 \in P_G$, there exists a nonempty subset $A'$ of $A(G)$
and positive real numbers $a_{\overrightarrow{e}}$
for $\overrightarrow{e} \in A'$
such that $\sum_{\overrightarrow{e} \in A'}
a_{\overrightarrow{e}}\rho(\overrightarrow{e})=0$.
If $(i_1, i_2) \in A'$, then we must have $(i_2, i_3) \in A'$ for some $i_3 \in V(G)$.
Continuing this process, eventually we obtain a directed cycle of $G$.
In general, if $G$ has a nonhomogeneous cycle (a directed cycle is a nonhomogeneous cycle),
then the dimension of $P_G$ is $|V(G)|-1$ (see \cite[Proposition 1.3]{Higashitani}).
Hence we have $|V(G)|=n+1$.
Since $G$ has at most $n(n+1)$ arrows, $P_G$ has at most $n(n+1)$ vertices.
On the other hand, $P_B$ has $2^{n+1}-2$ vertices.
Thus we have the inequality $2^{n+1}-2 \leq n(n+1)$,
but this inequality does not hold for $n \geq 3$. This is a contradiction.
Thus we proved the proposition.
\end{proof}

\begin{example}
There also exists a reflexive polytope associated to a finite directed graph
that cannot be obtained from any building set.
Let $G$ be the finite directed graph defined by
\begin{equation*}
V(G)=\{1, 2, 3, 4\},\quad
A(G)=\{(1, 2), (2, 3), (3, 1), (1, 4), (4, 3)\}.
\end{equation*}
Then $P_G$ cannot be obtained from any building set.
$P_G$ is a reflexive 3-polytope with six lattice points.
However, there are only three types of reflexive 3-polytopes with six lattice points
that are obtained from building sets.
They are realized by the following building sets:
\begin{align*}
&\{\{1\}, \{2\}, \{3\}, \{4\}, \{1, 2\}, \{1, 2, 3, 4\}\},\\
&\{\{1\}, \{2\}, \{3\}, \{4\}, \{1, 2, 3\}, \{1, 2, 3, 4\}\},\\
&\{\{1\}, \{2\}, \{3\}, \{1, 2, 3\}, \{4\}, \{5\}, \{4, 5\}\}.
\end{align*}
All the building sets yield reflexive polytopes not equivalent to $P_G$.
\begin{figure}[htbp]
\begin{center}
\includegraphics[width=2cm]{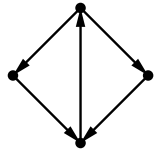}
\caption{the directed graph $G$ whose reflexive polytope cannot
be obtained from any building set.}
\end{center}
\end{figure}
\end{example}

\section{Proof of Theorem \ref{main}}

First we introduce some notation.

\begin{definition}
Let $B$ be a building set on $S$.
\begin{enumerate}
\item We denote by $\mathcal{N}(B)_{\rm max}$
the set of all maximal (by inclusion) nested sets of $B$.
$\mathcal{N}(B)_{\rm max}$ is a subset of $\mathcal{N}(B)$.
\item For $C \in B \setminus B_{\rm max}$, we call
\begin{equation*}
\mathcal{N}(B)_C=\{N \subset (B \setminus B_{\rm max}) \setminus \{C\} \mid
N \cup \{C\} \in \mathcal{N}(B)\}
\end{equation*}
the {\it link} of $C$ in $\mathcal{N}(B)$.
$\mathcal{N}(B)_C$ is an abstract simplicial complex on
\begin{equation*}
\{I \in (B \setminus B_{\rm max}) \setminus \{C\} \mid
\{I, C\} \in \mathcal{N}(B)\}.
\end{equation*}
\item For a nonempty proper subset $C$ of $S$, we call
\begin{equation*}
C \setminus B=\{I \subset S \setminus C \mid I \ne \emptyset;
I \in B \mbox{ or } C \cup I \in B\}
\end{equation*}
the {\it contraction} of $C$ from $B$.
$C \setminus B$ is a building set on $S \setminus C$.
\end{enumerate}
\end{definition}

The {\it symmetric difference} of two sets $X$ and $Y$
is defined by $X \triangle Y=(X \cup Y) \setminus (X \cap Y)$.

\begin{lemma}\label{lema}
Let $B$ be a connected building set on $S$
and let $I_1, I_2 \in B$ with $I_1 \cap I_2 \ne \emptyset,
I_1 \not\subset I_2, I_2 \not\subset I_1$ and $|I_1 \triangle I_2| \geq 3$.
Suppose that
\begin{align*}
&i_1 \in I_1 \setminus I_2, i_2 \in I_2 \setminus I_1,\\
&N \in \mathcal{N}(B|_{I_1 \cap I_2})_{\rm max},
N' \in \mathcal{N}(B|_{(I_1 \triangle I_2) \setminus \{i_1, i_2\}})_{\rm max}
\end{align*}
such that
\begin{equation}\label{I_k}
\{I_k\} \cup N \cup (B|_{I_1 \cap I_2})_{\rm max} \cup N' \cup
(B|_{(I_1 \triangle I_2) \setminus \{i_1, i_2\}})_{\rm max}
\end{equation}
is not a nested set of $B$ for some $k=1, 2$.
Then there exist $I'_1, I'_2 \in B$ such that $I'_1 \supset I_1, I'_2 \supset I_2,
i_1 \in I'_1 \setminus I'_2, i_2 \in I'_2 \setminus I'_1,
I'_1 \cap I'_2 \supsetneq I_1 \cap I_2$ and $I'_1 \cup I'_2=I_1 \cup I_2$.
\end{lemma}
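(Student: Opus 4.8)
The plan is to identify \emph{how} the set \eqref{I_k} fails to be a nested set of $B$ and, in each case, to produce $I'_1,I'_2$ by enlarging $I_1,I_2$ with a union against a suitable element of $B$, via axiom (1) of building sets. Since $B$ is connected we have $B_{\rm max}=\{S\}$, and because $I_1\not\subset I_2$ and $I_2\not\subset I_1$ we get $I_1,I_2\ne S$, hence $I_1,I_2\in B\setminus B_{\rm max}$ and $I_1\cup I_2\in B$. Write $M$ for the set \eqref{I_k} with $\{I_k\}$ removed. Every element of $M$ is a subset of $I_1\cap I_2$ or of $(I_1\triangle I_2)\setminus\{i_1,i_2\}$, hence a proper subset of $S$; together with $I_k\ne S$ this gives $M\cup\{I_k\}\subset B\setminus B_{\rm max}$, so \eqref{I_k} can fail to be a nested set only through a failure of condition (1) or condition (2) of Definition \ref{nested}. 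By the symmetry exchanging $(I_1,i_1)$ with $(I_2,i_2)$ (which fixes $I_1\cap I_2$, $I_1\triangle I_2$, $\{i_1,i_2\}$, and hence $M$), we may assume $k=1$.

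The core step is a preliminary claim: each of $N\cup(B|_{I_1\cap I_2})_{\rm max}$ and $N'\cup(B|_{(I_1\triangle I_2)\setminus\{i_1,i_2\}})_{\rm max}$ is \emph{itself} a nested set of $B$. For condition (1): two elements of $N$ (resp.\ $N'$) are comparable or disjoint because $N$ (resp.\ $N'$) is nested; a non-maximal element of a restriction lies inside some $B$-component of that restriction; and two distinct $B$-components of a restriction are disjoint, since otherwise their union would be a strictly larger element of the restriction, contradicting maximality. For condition (2): if pairwise disjoint elements have union $U\in B$, then $U$ lies inside a single $B$-component of the restriction, forcing each of them to be non-maximal, whereupon condition (2) for $N$ (resp.\ $N'$) inside the restriction is violated. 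Since $I_1\cap I_2$ and $(I_1\triangle I_2)\setminus\{i_1,i_2\}$ are disjoint, it follows that $M$ already satisfies condition (1) of Definition \ref{nested}; hence any failure of \eqref{I_1} to be nested must involve the element $I_1$.

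Now the cases. If condition (1) fails, the offending pair is $(I_1,J)$ with $J\in M$, $J\cap I_1\ne\emptyset$, $J\not\subset I_1$ (and $I_1\not\subset J$ automatically, since $i_1\notin J$); as an element of $M$ contained in $I_1\cap I_2$ would lie in $I_1$, we have $J\subset(I_1\triangle I_2)\setminus\{i_1,i_2\}$, so $J$ meets both $I_1\setminus I_2$ and $I_2\setminus I_1$, whence $J\cap I_2\ne\emptyset$ and $J\not\subset I_1\cap I_2$. Otherwise condition (2) fails, witnessed by pairwise disjoint $J_1,\dots,J_m\in M\cup\{I_1\}$ with $m\ge2$ and $W:=J_1\cup\cdots\cup J_m\in B$. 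If $I_1$ is not among the $J_i$, the preliminary claim shows the $J_i$ cannot all be subsets of $I_1\cap I_2$ nor all subsets of $(I_1\triangle I_2)\setminus\{i_1,i_2\}$ (else the union of a pairwise disjoint subfamily of a nested set would lie in $B$), so $W\cap(I_1\cap I_2)\ne\emptyset$ (hence $W$ meets $I_1$ and $I_2$), $W\not\subset I_1\cap I_2$, and $W\subset(I_1\cup I_2)\setminus\{i_1,i_2\}$. In both of these cases we have a set in $B$ — in the first case the set $J$ above, in the second the set $W$; call it $J$ — with $J\cap I_1\ne\emptyset$, $J\cap I_2\ne\emptyset$, $J\not\subset I_1\cap I_2$, $J\subset I_1\cup I_2$ and $i_1,i_2\notin J$; then $I'_1:=I_1\cup J$ and $I'_2:=I_2\cup J$ lie in $B$ by axiom (1), and one checks directly that $I'_1\cap I'_2=(I_1\cap I_2)\cup J\supsetneq I_1\cap I_2$, $I'_1\cup I'_2=I_1\cup I_2$, $i_1\in I'_1\setminus I'_2$, $i_2\in I'_2\setminus I'_1$. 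The only remaining case is $J_1=I_1$: then $J_2,\dots,J_m\in M$ are disjoint from $I_1$, so none is a subset of $I_1\cap I_2$, so each is a subset of $(I_2\setminus I_1)\setminus\{i_2\}$, giving $I_1\subsetneq W\subset(I_1\cup I_2)\setminus\{i_2\}$; here $I'_1:=W$ and $I'_2:=I_2$ work, since then $I'_1\cap I'_2=W\cap I_2=(I_1\cap I_2)\cup J_2\cup\cdots\cup J_m\supsetneq I_1\cap I_2$, $I'_1\cup I'_2=I_1\cup I_2$, $i_1\in I'_1\setminus I'_2$ and $i_2\in I'_2\setminus I'_1$.

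I expect the main obstacles to be establishing the preliminary claim and keeping the case analysis exhaustive and consistent — in particular, noticing that in the last case the symmetric enlargement $I'_j=I_j\cup W$ fails (it would put $i_1$ in both $I'_1$ and $I'_2$), so the asymmetric choice $I'_1=W$, $I'_2=I_2$ is forced.
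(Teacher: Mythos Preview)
Your argument is correct and mirrors the paper's proof: the same preliminary observation (that each of the two ``halves'' of $M$ is already a nested set of $B$) followed by the same three-way case split (condition~(1) fails; condition~(2) fails without $I_1$; condition~(2) fails with $I_1$ among the witnesses), with only cosmetic differences in the resulting choices of $I'_1,I'_2$ (in the condition-(1) case the paper enlarges only $I_1$ and keeps $I'_2=I_2$, whereas you enlarge both). One caveat: your sentence ``hence any failure of \eqref{I_k} to be nested must involve the element $I_1$'' is literally too strong, since condition~(2) can fail entirely within $M$; but your subsequent case analysis explicitly covers that possibility, so the proof stands.
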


\begin{proof}
The proof is similar to a part of the proof of \cite[Lemma 3.4 (1)]{Suyama1611}.

Without loss of generality, we may assume $k=1$.
Note that $\{I_1\} \cup N \cup (B|_{I_1 \cap I_2})_{\rm max}$
and $N' \cup (B|_{(I_1 \triangle I_2) \setminus \{i_1, i_2\}})_{\rm max}$
are nested sets of $B$.
Thus (\ref{I_k}) falls into the following three cases.

{\it Case 1}. Suppose that (\ref{I_k})
does not satisfy the condition (1) in Definition \ref{nested}.
Then there exist
\begin{equation*}
K \in \{I_1\} \cup N \cup (B|_{I_1 \cap I_2})_{\rm max},\quad
L \in N' \cup (B|_{(I_1 \triangle I_2) \setminus \{i_1, i_2\}})_{\rm max}
\end{equation*}
such that $K \not\subset L, L \not\subset K$ and $K \cap L \ne \emptyset$.
If $K \in N \cup (B|_{I_1 \cap I_2})_{\rm max}$, then $K \cap L=\emptyset$, a contradiction.
Thus we must have $K=I_1$. Then $I_1 \cup L \in B$.
We put $I'_1=I_1 \cup L$ and $I'_2=I_2$.
Since $L \subset I_1 \triangle I_2$,
it follows that $L \setminus I_1 \subset (I'_1 \cap I'_2) \setminus (I_1 \cap I_2)$.
Thus $I'_1 \cap I'_2 \supsetneq I_1 \cap I_2$.

{\it Case 2}. Suppose that (\ref{I_k})
does not satisfy the condition (2) in Definition \ref{nested},
and there exist
\begin{equation*}
K_1, \ldots, K_r \in N \cup (B|_{I_1 \cap I_2})_{\rm max},\quad
L_1, \ldots, L_s \in N' \cup (B|_{(I_1 \triangle I_2) \setminus \{i_1, i_2\}})_{\rm max}
\end{equation*}
for $r, s \geq 1$ such that $K_1, \ldots, K_r, L_1, \ldots, L_s$ are pairwise disjoint
and $K_1 \cup \cdots \cup K_r \cup L_1 \cup \cdots \cup L_s \in B$.
Then we have $I_k \cup L_1 \cup \cdots \cup L_s \in B$ for each $k=1, 2$.
We put $I'_k=I_k \cup L_1 \cup \cdots \cup L_s$ for $k=1, 2$.
$L_1 \cup \cdots \cup L_s \subset I_1 \triangle I_2$ implies
$I_k \subsetneq I'_k$ for some $k=1, 2$.
Since $I'_k \setminus I_k \subset (I'_1 \cap I'_2) \setminus (I_1 \cap I_2)$,
we have $I'_1 \cap I'_2 \supsetneq I_1 \cap I_2$.

{\it Case 3}. Suppose that (\ref{I_k})
does not satisfy the condition (2) in Definition \ref{nested},
and there exist
\begin{equation*}
L_1, \ldots, L_s \in N' \cup (B|_{(I_1 \triangle I_2) \setminus \{i_1, i_2\}})_{\rm max}
\end{equation*}
such that $I_1, L_1, \ldots, L_s$ are pairwise disjoint
and $I_1 \cup L_1 \cup \cdots \cup L_s \in B$.
We put $I'_1=I_1 \cup L_1 \cup \cdots \cup L_s$ and $I'_2=I_2$.
Since $L_1 \cup \cdots \cup L_s \subset I_2$,
it follows that
$L_1 \cup \cdots \cup L_s \subset (I'_1 \cap I'_2) \setminus (I_1 \cap I_2)$.
Thus $I'_1 \cap I'_2 \supsetneq I_1 \cap I_2$.

In every case, we have $i_1 \in I'_1 \setminus I'_2, i_2 \in I'_2 \setminus I'_1$
and $I'_1 \cup I'_2=I_1 \cup I_2$.
This completes the proof.
\end{proof}

Lemmas \ref{lemb} and \ref{lemc} play key roles in the proof of Theorem \ref{main}.

\begin{lemma}\label{lemb}
Let $B$ be a connected building set on $S$
and let $I_1, I_2 \in B$ with $I_1 \cap I_2 \ne \emptyset,
I_1 \not\subset I_2, I_2 \not\subset I_1$ and $I_1 \cap I_2 \notin B$.
Then there exist
\begin{align*}
&J_1, J_2 \in B, j_1 \in J_1 \setminus J_2, j_2 \in J_2 \setminus J_1,\\
&N \in \mathcal{N}(B|_{J_1 \cap J_2})_{\rm max},
N' \in \mathcal{N}(B|_{(J_1 \triangle J_2) \setminus \{j_1, j_2\}})_{\rm max}
\end{align*}
such that $J_1 \cap J_2 \ne \emptyset, J_1 \cap J_2 \notin B,
J_1 \cup J_2 \subset I_1 \cup I_2$ and
\begin{equation*}
\{J_k\} \cup N \cup (B|_{J_1 \cap J_2})_{\rm max} \cup N' \cup
(B|_{(J_1 \triangle J_2) \setminus \{j_1, j_2\}})_{\rm max}
\end{equation*}
is a nested set of $B$ for each $k=1, 2$.
If $J_1 \triangle J_2=\{j_1, j_2\}$,
then $N'$ and $(B|_{(J_1 \triangle J_2) \setminus \{j_1, j_2\}})_{\rm max}$
are understood to be empty.
\end{lemma}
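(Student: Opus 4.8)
## Proof Proposal for Lemma \ref{lemb}

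The plan is to argue by induction on $|I_1 \triangle I_2|$, using Lemma \ref{lema} to increase the overlap $I_1 \cap I_2$ whenever the candidate set fails to be nested. First I would dispose of the base case: when $|I_1 \triangle I_2| \leq 2$. Since $I_1 \not\subset I_2$ and $I_2 \not\subset I_1$, both $I_1 \setminus I_2$ and $I_2 \setminus I_1$ are nonempty, so $|I_1 \triangle I_2| \geq 2$, forcing $|I_1 \triangle I_2| = 2$, say $I_1 \setminus I_2 = \{j_1\}$ and $I_2 \setminus I_1 = \{j_2\}$. Here $J_1 \triangle J_2 = \{j_1, j_2\}$, so $N'$ and $(B|_{(J_1 \triangle J_2)\setminus\{j_1,j_2\}})_{\rm max}$ are empty by convention, and I must only check that $\{I_k\} \cup N \cup (B|_{I_1 \cap I_2})_{\rm max}$ is a nested set of $B$ for $k = 1, 2$, where $N \in \mathcal{N}(B|_{I_1 \cap I_2})_{\rm max}$. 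This should follow from a direct verification of the two conditions in Definition \ref{nested}: the elements of $N \cup (B|_{I_1 \cap I_2})_{\rm max}$ are all contained in $I_1 \cap I_2$ and form a nested set there (together with the $B$-components of $B|_{I_1\cap I_2}$), and $I_k$ properly contains every element of that collection since $I_1 \cap I_2 \notin B$ means $I_1 \cap I_2$ is not among the $(B|_{I_1\cap I_2})_{\rm max}$ ... actually $(B|_{I_1\cap I_2})_{\rm max}$ consists of maximal elements of $B|_{I_1\cap I_2}$, each a proper subset of $I_1 \cap I_2 \subsetneq I_k$; the nestedness condition (2) cannot be violated because any union of pairwise disjoint elements from $N \cup (B|_{I_1\cap I_2})_{\rm max}$ lies inside $I_1 \cap I_2$ and is therefore not in $B$ (as $I_1 \cap I_2 \notin B$ and any union of disjoint elements avoiding all of $I_1\cap I_2$... ) — I would need to be a little careful here, but the key point is that $I_1 \cap I_2 \notin B$ blocks the bad unions, and adding $I_k$ only adds a strictly larger set that is comparable to everything below it.

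For the inductive step, suppose $|I_1 \triangle I_2| \geq 3$. Pick any $i_1 \in I_1 \setminus I_2$ and $i_2 \in I_2 \setminus I_1$, and choose arbitrary maximal nested sets $N \in \mathcal{N}(B|_{I_1 \cap I_2})_{\rm max}$ and $N' \in \mathcal{N}(B|_{(I_1 \triangle I_2)\setminus\{i_1,i_2\}})_{\rm max}$. If the set in (\ref{I_k}) is a nested set of $B$ for \emph{both} $k = 1$ and $k = 2$, then we are done: take $J_k = I_k$, $j_k = i_k$, and the given $N, N'$, noting $J_1 \cap J_2 = I_1 \cap I_2 \ne \emptyset$ is not in $B$ and $J_1 \cup J_2 = I_1 \cup I_2 \subset I_1 \cup I_2$. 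Otherwise, (\ref{I_k}) fails to be nested for some $k$, and Lemma \ref{lema} applies (its hypotheses $I_1 \cap I_2 \ne \emptyset$, incomparability, and $|I_1 \triangle I_2| \geq 3$ are exactly what we have), yielding $I'_1 \supset I_1$, $I'_2 \supset I_2$ with $i_1 \in I'_1 \setminus I'_2$, $i_2 \in I'_2 \setminus I'_1$, $I'_1 \cap I'_2 \supsetneq I_1 \cap I_2$, and $I'_1 \cup I'_2 = I_1 \cup I_2$. Now I would check that the pair $(I'_1, I'_2)$ still satisfies the lemma's hypotheses: $I'_1 \cap I'_2 \ne \emptyset$ is immediate; $I'_1 \not\subset I'_2$ and $I'_2 \not\subset I'_1$ follow from $i_1 \in I'_1 \setminus I'_2$ and $i_2 \in I'_2 \setminus I'_1$; and $|I'_1 \triangle I'_2| < |I_1 \triangle I_2|$ because $I'_1 \cup I'_2 = I_1 \cup I_2$ while $I'_1 \cap I'_2$ strictly grew. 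The one remaining hypothesis of the lemma is $I'_1 \cap I'_2 \notin B$ — if $I'_1 \cap I'_2 \in B$, then the hypotheses of the lemma are not met by the new pair, so I must handle this: in that case I would replace $I_2'$ (or $I_1'$) by... hmm, actually here is the subtlety. If $I'_1 \cap I'_2 \in B$, the inductive hypothesis does not apply. The resolution: since $I'_1 \cap I'_2 \subsetneq I'_1$ (because $i_1 \in I'_1 \setminus I'_2$), and $I'_1 \cap I'_2 \in B$, I would then look for a new incomparable pair by shrinking — but this is the delicate point.

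The main obstacle I anticipate is exactly this last issue: ensuring that after applying Lemma \ref{lema} to enlarge the sets, the resulting intersection is \emph{still not} in $B$, so that the induction can continue. One way to handle it: instead of applying Lemma \ref{lema} to $(I_1, I_2)$ directly, first pass to a \emph{minimal} such pair among all $(J_1, J_2) \in B \times B$ with $J_1 \cap J_2 \notin B$, $J_1, J_2$ incomparable, $J_1 \cup J_2 \subset I_1 \cup I_2$, ordered so that $|J_1 \triangle J_2|$ is minimized; for such a minimal pair, if (\ref{I_k}) were non-nested, Lemma \ref{lema} would produce $(I'_1, I'_2)$ with smaller symmetric difference and the same union, and since its intersection strictly contains $J_1 \cap J_2$, if that new intersection were in $B$ we would have a contradiction with... no — we would need to know $I'_1 \cap I'_2 \notin B$ to contradict minimality. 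So the cleanest argument is probably: \emph{among all pairs} $(J_1, J_2)$ with $J_1, J_2 \in B$, $J_1 \cup J_2 \subset I_1 \cup I_2$, $J_1 \cap J_2 \notin B$, $J_1, J_2$ incomparable, and such that \eqref{I_k}-type sets are \emph{not} both nested, choose one with $|J_1 \triangle J_2|$ minimal; apply Lemma \ref{lema}; the output pair $(I'_1, I'_2)$ has $I'_1 \cap I'_2 \supsetneq J_1 \cap J_2$, and I claim $I'_1 \cap I'_2 \notin B$ (else $I'_1, I'_2$ would be a pair of the \emph{original} type in the conclusion and we could recurse on \emph{it} having strictly smaller symmetric difference, eventually terminating) — so $(I'_1, I'_2)$ contradicts minimality unless its associated sets are both nested, in which case $(I'_1, I'_2)$ is the desired pair. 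Making this well-founded recursion airtight — precisely tracking which pairs have intersection in $B$ versus not, and confirming the process terminates at a pair whose candidate nested sets work for both $k = 1, 2$ — is the technical heart of the argument, and I expect it to mirror closely the structure of the proof of \cite[Lemma 3.4]{Suyama1611}.
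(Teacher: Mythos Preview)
Your overall scheme---induction on $|I_1 \triangle I_2|$, base case $|I_1 \triangle I_2|=2$, and in the inductive step invoking Lemma~\ref{lema} when the candidate collection fails to be nested---is exactly the paper's approach, and your base case is fine. The genuine gap is precisely where you flagged it: after Lemma~\ref{lema} produces $I'_1 \supset I_1$, $I'_2 \supset I_2$ with $I'_1 \cap I'_2 \supsetneq I_1 \cap I_2$ and $I'_1 \cup I'_2 = I_1 \cup I_2$, nothing prevents $I'_1 \cap I'_2 \in B$. Your proposed minimality argument does not close this: you are minimizing over pairs whose intersection is \emph{not} in $B$, so if the output $(I'_1, I'_2)$ has $I'_1 \cap I'_2 \in B$ it is simply outside the class you are minimizing over, and no contradiction follows. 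The parenthetical ``else \dots\ we could recurse on it'' is circular, since the pair $(I'_1, I'_2)$ no longer satisfies the lemma's hypotheses at all.

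The missing idea, which the paper supplies, is to \emph{mix} the old and new sets when $I'_1 \cap I'_2 \in B$. Since the intersection strictly grew, at least one of $I_1 \subsetneq I'_1$ or $I_2 \subsetneq I'_2$ holds; say $I_1 \subsetneq I'_1$. Now test the intermediate intersection $I'_1 \cap I_2$. If $I'_1 \cap I_2 \in B$, take the new pair $I''_1 = I_1$, $I''_2 = I'_1 \cap I_2$: then $I''_1 \cap I''_2 = I_1 \cap I_2 \notin B$, while $i_2 \notin I''_1 \cup I''_2$ forces $I''_1 \cup I''_2 \subsetneq I_1 \cup I_2$. If $I'_1 \cap I_2 \notin B$, take $I''_1 = I'_1$, $I''_2 = I_2$: then $I''_1 \cap I''_2 = I'_1 \cap I_2 \notin B$ and $I''_1 \cap I''_2 \supsetneq I_1 \cap I_2$ while $I''_1 \cup I''_2 = I_1 \cup I_2$. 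In either subcase the new pair has intersection not in $B$, is incomparable, and has strictly smaller symmetric difference, so the induction hypothesis applies. This two-subcase repair is the technical heart you were looking for; once you insert it, your argument becomes the paper's proof.
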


\begin{proof}
We use induction on $|I_1 \triangle I_2|$.
We have $|I_1 \triangle I_2| \geq 2$.
Suppose $|I_1 \triangle I_2|=2$. We put $J_1=I_1$ and $J_2=I_2$.
Then $J_1 \cap J_2 \ne \emptyset, J_1 \cap J_2 \notin B$
and $J_1 \cup J_2=I_1 \cup I_2$.
We pick $N \in \mathcal{N}(B|_{J_1 \cap J_2})_{\rm max}$.
Then $\{J_k\} \cup N \cup (B|_{J_1 \cap J_2})_{\rm max}$
is a nested set of $B$ for each $k=1, 2$.

Suppose $|I_1 \triangle I_2| \geq 3$.
We pick $i_1 \in I_1 \setminus I_2, i_2 \in I_2 \setminus I_1,
N \in \mathcal{N}(B|_{I_1 \cap I_2})_{\rm max}$ and 
$N' \in \mathcal{N}(B|_{(I_1 \triangle I_2) \setminus \{i_1, i_2\}})_{\rm max}$. If
\begin{equation*}
\{I_k\} \cup N \cup (B|_{I_1 \cap I_2})_{\rm max} \cup N' \cup
(B|_{(I_1 \triangle I_2) \setminus \{i_1, i_2\}})_{\rm max}
\end{equation*}
is a nested set of $B$ for each $k=1, 2$, then there is nothing to prove.
Otherwise, by Lemma \ref{lema},
there exist $I'_1, I'_2 \in B$ such that $I'_1 \supset I_1, I'_2 \supset I_2,
i_1 \in I'_1 \setminus I'_2, i_2 \in I'_2 \setminus I'_1,
I'_1 \cap I'_2 \supsetneq I_1 \cap I_2$ and $I'_1 \cup I'_2=I_1 \cup I_2$.

{\it Case 1}. Suppose $I'_1 \cap I'_2 \notin B$. We have
$|I'_1 \triangle I'_2|=|I'_1 \cup I'_2|-|I'_1 \cap I'_2|
<|I_1 \cup I_2|-|I_1 \cap I_2|=|I_1 \triangle I_2|$.
By the hypothesis of induction, there exist
\begin{align*}
&J_1, J_2 \in B, j_1 \in J_1 \setminus J_2, j_2 \in J_2 \setminus J_1,\\
&N \in \mathcal{N}(B|_{J_1 \cap J_2})_{\rm max},
N' \in \mathcal{N}(B|_{(J_1 \triangle J_2) \setminus \{j_1, j_2\}})_{\rm max}
\end{align*}
such that $J_1 \cap J_2 \ne \emptyset, J_1 \cap J_2 \notin B,
J_1 \cup J_2 \subset I'_1 \cup I'_2=I_1 \cup I_2$ and
\begin{equation*}
\{J_k\} \cup N \cup (B|_{J_1 \cap J_2})_{\rm max} \cup N' \cup
(B|_{(J_1 \triangle J_2) \setminus \{j_1, j_2\}})_{\rm max}
\end{equation*}
is a nested set of $B$ for each $k=1, 2$.

{\it Case 2}. Suppose $I'_1 \cap I'_2 \in B$. We may assume that $I_1 \subsetneq I'_1$.

{\it Subcase 2.1}. Suppose $I'_1 \cap I_2 \in B$.
We put $I''_1=I_1$ and $I''_2=I'_1 \cap I_2$.
Then we have $I''_1 \cap I''_2=I_1 \cap I_2 \notin B, i_1 \in I''_1 \setminus I''_2$
and $I'_1 \setminus I_1 \subset I''_2 \setminus I''_1$.
Since $i_2 \in (I_1 \cup I_2) \setminus (I''_1 \cup I''_2)$,
we have $I''_1 \cup I''_2 \subsetneq I_1 \cup I_2$.

{\it Subcase 2.2}. Suppose $I'_1 \cap I_2 \notin B$.
We put $I''_1=I'_1$ and $I''_2=I_2$.
Then we have $I''_1 \cap I''_2=I'_1 \cap I_2 \notin B,
i_1 \in I''_1 \setminus I''_2, i_2 \in I''_2 \setminus I''_1$
and $I''_1 \cup I''_2=I'_1 \cup I_2=I_1 \cup I_2$.
Since $I'_1 \setminus I_1 \subset (I''_1 \cap I''_2) \setminus (I_1 \cap I_2)$,
we have $I_1 \cap I_2 \subsetneq I''_1 \cap I''_2$.

In every subcase, we have
$|I''_1 \triangle I''_2|=|I''_1 \cup I''_2|-|I''_1 \cap I''_2|
<|I_1 \cup I_2|-|I_1 \cap I_2|=|I_1 \triangle I_2|$.
By the hypothesis of induction,
there exist
\begin{align*}
&J_1, J_2 \in B, j_1 \in J_1 \setminus J_2, j_2 \in J_2 \setminus J_1,\\
&N \in \mathcal{N}(B|_{J_1 \cap J_2})_{\rm max},
N' \in \mathcal{N}(B|_{(J_1 \triangle J_2) \setminus \{j_1, j_2\}})_{\rm max}
\end{align*}
such that $J_1 \cap J_2 \ne \emptyset, J_1 \cap J_2 \notin B,
J_1 \cup J_2 \subset I''_1 \cup I''_2 \subset I_1 \cup I_2$ and
\begin{equation*}
\{J_k\} \cup N \cup (B|_{J_1 \cap J_2})_{\rm max} \cup N' \cup
(B|_{(J_1 \triangle J_2) \setminus \{j_1, j_2\}})_{\rm max}
\end{equation*}
is a nested set of $B$ for each $k=1, 2$.

Therefore the assertion holds for $|I_1 \triangle I_2|$.
\end{proof}

\begin{example}
Let $S=\{1, 2, 3, 4, 5, 6\}$ and
\begin{align*}
B&=\{\{1\}, \{2\}, \{3\}, \{4\}, \{5\}, \{6\}, \{1, 2, 3, 4\}, \{2, 3, 4, 5\}, \{3, 4, 5, 6\},\\
&\{1, 2, 3, 4, 5\}, \{2, 3, 4, 5, 6\}, \{1, 2, 3, 4, 5, 6\}\}.
\end{align*}
Let us consider $I_1=\{1, 2, 3, 4\}$ and $I_2=\{3, 4, 5, 6\}$.
We pick $i_1=1$ and $i_2=6$.
Then
\begin{equation*}
B|_{I_1 \cap I_2}=\{\{3\}, \{4\}\},\quad
B|_{(I_1 \triangle I_2) \setminus \{i_1, i_2\}}=\{\{2\}, \{5\}\}.
\end{equation*}
The only maximal nested set of each is the empty set. However,
\begin{align*}
&\{I_1\} \cup \emptyset \cup (B|_{I_1 \cap I_2})_{\rm max}
\cup \emptyset \cup(B|_{(I_1 \triangle I_2) \setminus \{i_1, i_2\}})_{\rm max}\\
&=\{\{1, 2, 3, 4\}, \{3\}, \{4\}, \{2\}, \{5\}\}
\end{align*}
is not a nested set because
$\{3\} \cup \{4\} \cup \{2\} \cup \{5\}=\{2, 3, 4, 5\} \in B$
(Lemma \ref{lema}, Case 2).
Thus we put
\begin{equation*}
I^{(1)}_1=I_1 \cup \{2, 3, 4, 5\}=\{1, 2, 3, 4, 5\},\quad
I^{(1)}_2=I_2 \cup \{2, 3, 4, 5\}=\{2, 3, 4, 5, 6\}.
\end{equation*}
We have $I^{(1)}_1 \cap I^{(1)}_2=\{2, 3, 4, 5\} \in B$ (Lemma \ref{lemb}, Case 2)
and $I_1 \subsetneq I^{(1)}_1$.
Since $I^{(1)}_1 \cap I_2=\{3, 4, 5\} \notin B$ (Subcase 2.2), we put
\begin{equation*}
I^{(2)}_1=I^{(1)}_1=\{1, 2, 3, 4, 5\},\quad
I^{(2)}_2=I_2=\{3, 4, 5, 6\}.
\end{equation*}
We pick $i^{(2)}_1=1$ and $i^{(2)}_2=6$. Then
\begin{equation*}
B|_{I^{(2)}_1 \cap I^{(2)}_2}=\{\{3\}, \{4\}, \{5\}\},\quad
B|_{(I^{(2)}_1 \triangle I^{(2)}_2) \setminus \{i^{(2)}_1, i^{(2)}_2\}}=\{\{2\}\}.
\end{equation*}
The only maximal nested set of each is the empty set.
\begin{align*}
&\{I^{(2)}_1\} \cup \emptyset \cup (B|_{I^{(2)}_1 \cap I^{(2)}_2})_{\rm max}
\cup \emptyset \cup(B|_{(I^{(2)}_1 \triangle I^{(2)}_2)
\setminus \{i^{(2)}_1, i^{(2)}_2\}})_{\rm max}\\
&=\{\{1, 2, 3, 4, 5\}, \{3\}, \{4\}, \{5\}, \{2\}\}
\end{align*}
is not a nested set because
$\{3\} \cup \{4\} \cup \{5\} \cup \{2\}=\{2, 3, 4, 5\} \in B$
(Lemma \ref{lema}, Case 2).
Thus we put
\begin{equation*}
I^{(3)}_1=I^{(2)}_1 \cup \{2, 3, 4, 5\}=\{1, 2, 3, 4, 5\},\quad
I^{(3)}_2=I^{(2)}_2 \cup \{2, 3, 4, 5\}=\{2, 3, 4, 5, 6\}.
\end{equation*}
We have $I^{(3)}_1 \cap I^{(3)}_2=\{2, 3, 4, 5\} \in B$ (Lemma \ref{lemb}, Case 2)
and $I^{(2)}_2 \subsetneq I^{(3)}_2$.
Since $I^{(2)}_1 \cap I^{(3)}_2=\{2, 3, 4, 5\} \in B$ (Subcase 2.1), we put
\begin{equation*}
I^{(4)}_1=I^{(2)}_1 \cap I^{(3)}_2=\{2, 3, 4, 5\},\quad
I^{(4)}_2=I^{(2)}_2=\{3, 4, 5, 6\}.
\end{equation*}
Then
\begin{equation*}
B|_{I^{(4)}_1 \cap I^{(4)}_2}=\{\{3\}, \{4\}, \{5\}\},\quad
|I^{(4)}_1 \triangle I^{(4)}_2|=2.
\end{equation*}
The only maximal nested set of $B|_{I^{(4)}_1 \cap I^{(4)}_2}$ is the empty set and
\begin{align*}
\{I^{(4)}_1\} \cup \emptyset \cup (B|_{I^{(4)}_1 \cap I^{(4)}_2})_{\rm max}
&=\{\{2, 3, 4, 5\}, \{3\}, \{4\}, \{5\}\},\\
\{I^{(4)}_2\} \cup \emptyset \cup (B|_{I^{(4)}_1 \cap I^{(4)}_2})_{\rm max}
&=\{\{3, 4, 5, 6\}, \{3\}, \{4\}, \{5\}\}
\end{align*}
are nested sets of $B$.
\end{example}

\begin{lemma}\label{lemc}
Let $B$ be a connected building set on $S$
and let $I_1, I_2 \in B$ with $I_1 \cap I_2 \ne \emptyset,
I_1 \not\subset I_2, I_2 \not\subset I_1$ and $|(B|_{I_1 \cap I_2})_{\rm max}| \geq 3$.
Then there exist
\begin{align*}
&J_1, J_2 \in B, j_1 \in J_1 \setminus J_2, j_2 \in J_2 \setminus J_1,\\
&N \in \mathcal{N}(B|_{J_1 \cap J_2})_{\rm max},
N' \in \mathcal{N}(B|_{(J_1 \triangle J_2) \setminus \{j_1, j_2\}})_{\rm max}
\end{align*}
such that $J_1 \cap J_2 \ne \emptyset, J_1 \cap J_2 \notin B$ and
\begin{equation*}
\{J_k\} \cup N \cup (B|_{J_1 \cap J_2})_{\rm max} \cup N' \cup
(B|_{(J_1 \triangle J_2) \setminus \{j_1, j_2\}})_{\rm max}
\end{equation*}
is a nested set of $B$ for each $k=1, 2$.
Furthermore, we have $J_1 \cup J_2 \subsetneq I_1 \cup I_2$
or $|(B|_{J_1 \cap J_2})_{\rm max}| \geq 3$.
If $J_1 \triangle J_2=\{j_1, j_2\}$,
then $N'$ and $(B|_{(J_1 \triangle J_2) \setminus \{j_1, j_2\}})_{\rm max}$
are understood to be empty.
\end{lemma}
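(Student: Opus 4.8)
The plan is to prove Lemma \ref{lemc} by induction on $|I_1 \triangle I_2|$, following the proof of Lemma \ref{lemb} and carrying along the extra information needed for the final ("furthermore") clause. The first observation is that $|(B|_{I_1 \cap I_2})_{\rm max}| \geq 3$ already forces $I_1 \cap I_2 \notin B$ (otherwise $B|_{I_1 \cap I_2}$ would be connected), so in particular $|I_1 \triangle I_2| \geq 2$ and Lemma \ref{lemb} is applicable both to $(I_1,I_2)$ and to every auxiliary pair we produce whose intersection is not in $B$.

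For the base case $|I_1 \triangle I_2| = 2$ — and, more generally, whenever one can choose $i_1 \in I_1 \setminus I_2$, $i_2 \in I_2 \setminus I_1$, $N \in \mathcal{N}(B|_{I_1 \cap I_2})_{\rm max}$ and $N' \in \mathcal{N}(B|_{(I_1 \triangle I_2) \setminus \{i_1,i_2\}})_{\rm max}$ making the displayed set a nested set of $B$ for both $k=1,2$ — we simply take $(J_1,J_2)=(I_1,I_2)$, and the "furthermore" clause holds because $|(B|_{J_1 \cap J_2})_{\rm max}| = |(B|_{I_1 \cap I_2})_{\rm max}| \geq 3$ (this is the point where the alternative "$|(B|_{J_1 \cap J_2})_{\rm max}| \geq 3$" is genuinely used). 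Otherwise the displayed set fails for some $k$, and Lemma \ref{lema} supplies $I'_1,I'_2 \in B$ with $I'_1 \cap I'_2 \supsetneq I_1 \cap I_2$ and $I'_1 \cup I'_2 = I_1 \cup I_2$, so $|I'_1 \triangle I'_2| < |I_1 \triangle I_2|$. I then mimic the case split of Lemma \ref{lemb}. If $I'_1 \cap I'_2 \notin B$ and still $|(B|_{I'_1 \cap I'_2})_{\rm max}| \geq 3$, apply the induction hypothesis of Lemma \ref{lemc} to $(I'_1,I'_2)$; since $I'_1 \cup I'_2 = I_1 \cup I_2$, the output is valid for $(I_1,I_2)$ as well. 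If $I'_1 \cap I'_2 \in B$ with, say, $I_1 \subsetneq I'_1$ and $I'_1 \cap I_2 \in B$, apply the induction hypothesis to the pair $(I_1, I'_1 \cap I_2)$: its intersection equals $I_1 \cap I_2$ (hence still has $\geq 3$ components) and its union omits $i_2$, so $I_1 \cup (I'_1 \cap I_2) \subsetneq I_1 \cup I_2$; again the output pulls back. The same kind of reduction disposes of the remaining subcase where $I'_1 \cap I_2$ still has $\geq 3$ components.

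The crux is the two subcases in which the enlargement collapses the component count below three without shrinking the union: $I'_1 \cap I'_2 \notin B$ with $|(B|_{I'_1 \cap I'_2})_{\rm max}|=2$, and the analogue of Subcase 2.2 where $I'_1 \cap I_2 \notin B$ and $|(B|_{I'_1 \cap I_2})_{\rm max}|=2$. In both, we reach a pair $(A_1,A_2)$ with $A_1 \cap A_2 \notin B$, $A_1 \cup A_2 = I_1 \cup I_2$, $A_1 \cap A_2 \supsetneq I_1 \cap I_2$, and exactly two $B|_{A_1 \cap A_2}$-components. Since $A_1 \cap A_2$ contains $B|_{I_1 \cap I_2}$, by pigeonhole one of those two components absorbs at least two components $C_1,C_2$ of $B|_{I_1 \cap I_2}$; note $C_1,C_2 \in B$ are disjoint with $C_1 \cup C_2 \notin B$. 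I now pass to the pair $(E_1,E_2)$, where $E_i$ is an inclusion-minimal element of $\{E \in B \mid C_1 \cup C_2 \subseteq E \subseteq I_i\}$. This set is nonempty (it contains $I_i$); $E_1 \neq E_2$ and $E_1,E_2$ are incomparable, for an element lying in both, or contained in the other, would lie in $I_1 \cap I_2$ and connect $C_1$ to $C_2$ there; and if $E_1 \cap E_2$ were in $B$ it would be a strictly smaller member of $\{E \in B \mid C_1 \cup C_2 \subseteq E \subseteq I_1\}$, contradicting minimality of $E_1$ — so $E_1 \cap E_2 \notin B$, with $C_1 \cup C_2 \subseteq E_1 \cap E_2 \subseteq I_1 \cap I_2$. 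If $E_1 \cup E_2 \subsetneq I_1 \cup I_2$, then Lemma \ref{lemb} applied to $(E_1,E_2)$ gives $(J_1,J_2)$ with $J_1 \cup J_2 \subseteq E_1 \cup E_2 \subsetneq I_1 \cup I_2$, finishing the case.

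The main obstacle I expect is the residual situation $E_1 \cup E_2 = I_1 \cup I_2$. There $E_i \supseteq I_i \setminus I_{3-i}$, and, using that any component $C_3$ of $B|_{I_1 \cap I_2}$ meeting $E_i$ must (again by minimality, since $C_3 \cup E_i \in B$ lies in $I_i$ and contains $C_1 \cup C_2$) be contained in $E_i$, I would try to show that a third component survives in $E_1 \cap E_2$, so that $|(B|_{E_1 \cap E_2})_{\rm max}| \geq 3$ and $(E_1,E_2)$ itself can be taken as the output; failing that, one reroutes once more to a pair of the form $(I_1, E)$ with $I_1 \cap E = I_1 \cap I_2$ and $I_1 \cup E \subsetneq I_1 \cup I_2$, on which the induction hypothesis applies. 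This last piece of case bookkeeping — ruling out the configuration in which both minimal sets $E_1, E_2$ grab only $C_1,C_2$ among the components while their union is still all of $I_1 \cup I_2$ — is where the real work lies, and it parallels the most technical portion of \cite{Suyama1611}.
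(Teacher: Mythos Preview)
Your inductive framework and use of Lemma~\ref{lema} match the paper's, and the easy cases (the base step, and the subcases where the new intersection still has $\geq 3$ components or where you can recurse on $(I_1, I'_1 \cap I_2)$) are fine. The genuine gap is exactly where you locate it: the ``crux cases'' and their residual situation $E_1 \cup E_2 = I_1 \cup I_2$. First, the minimality argument you sketch for ``$C_3$ meets $E_i$ implies $C_3 \subseteq E_i$'' does not work as stated: $C_3 \cup E_i$ is a \emph{superset} of $E_i$, and inclusion-minimality of $E_i$ constrains only subsets. Second, even granting that claim, taking $(E_1,E_2)$ directly as output would still require verifying that the displayed set is nested, and when $E_1 \cup E_2 = I_1 \cup I_2$ one has $|E_1 \triangle E_2| \geq |I_1 \triangle I_2|$, so the induction hypothesis is unavailable; your fallback ``reroute to $(I_1,E)$'' is not specified.

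The paper sidesteps all of this with a cleaner move. It splits only on whether $|(B|_{I'_1 \cap I'_2})_{\rm max}| \geq 3$ (Case~1: recurse on $(I'_1,I'_2)$) or $\leq 2$ (Case~2), without mimicking Lemma~\ref{lemb}'s subdivision. In Case~2 it takes the absorbing component $L \in (B|_{I'_1 \cap I'_2})_{\rm max}$ itself --- the object you call the component containing $C_1, C_2$ --- and observes that $L \not\subset I_1 \cap I_2$ (else $L = K_1 \cup \dots \cup K_r \notin B$), say $L \not\subset I_1$. The new pair is then $(I_1 \cap L,\, I_2)$ if $I_1 \cap L \in B$, or $(I_1,\, L)$ if not. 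Because $L \subseteq I'_1 \cap I'_2$ contains neither $i_1$ nor $i_2$, this pair has intersection $\notin B$ and union missing $i_1$ (respectively $i_2$), hence strictly contained in $I_1 \cup I_2$. Now Lemma~\ref{lemb} --- not the inductive hypothesis of Lemma~\ref{lemc} --- finishes, producing $J_1, J_2$ with $J_1 \cup J_2 \subsetneq I_1 \cup I_2$, so the ``furthermore'' clause holds via its first alternative and no residual case ever arises. The idea you are missing is simply to use $L$ (which is automatically interior to $I_1 \cup I_2$) as one member of the new pair, rather than searching for minimal $E_i \subseteq I_i$.
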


\begin{proof}
We use induction on $|I_1 \triangle I_2|$.
We have $|I_1 \triangle I_2| \geq 2$.
Suppose $|I_1 \triangle I_2|=2$. We put $J_1=I_1$ and $J_2=I_2$.
Then $J_1 \cap J_2 \ne \emptyset$
and $|(B|_{J_1 \cap J_2})_{\rm max}| \geq 3$.
We pick $N \in \mathcal{N}(B|_{J_1 \cap J_2})_{\rm max}$.
Then $\{J_k\} \cup N \cup (B|_{J_1 \cap J_2})_{\rm max}$
is a nested set of $B$ for each $k=1, 2$.

Suppose $|I_1 \triangle I_2| \geq 3$.
We pick $i_1 \in I_1 \setminus I_2, i_2 \in I_2 \setminus I_1,
N \in \mathcal{N}(B|_{I_1 \cap I_2})_{\rm max}$ and 
$N' \in \mathcal{N}(B|_{(I_1 \triangle I_2) \setminus \{i_1, i_2\}})_{\rm max}$. If
\begin{equation*}
\{I_k\} \cup N \cup (B|_{I_1 \cap I_2})_{\rm max} \cup N' \cup
(B|_{(I_1 \triangle I_2) \setminus \{i_1, i_2\}})_{\rm max}
\end{equation*}
is a nested set of $B$ for each $k=1, 2$, then there is nothing to prove.
Otherwise, by Lemma \ref{lema},
there exist $I'_1, I'_2 \in B$ such that $I'_1 \supset I_1, I'_2 \supset I_2,
i_1 \in I'_1 \setminus I'_2, i_2 \in I'_2 \setminus I'_1,
I'_1 \cap I'_2 \supsetneq I_1 \cap I_2$ and $I'_1 \cup I'_2=I_1 \cup I_2$.

{\it Case 1}. Suppose $|(B|_{I'_1 \cap I'_2})_{\rm max}| \geq 3$.
We have $|I'_1 \triangle I'_2|=|I'_1 \cup I'_2|-|I'_1 \cap I'_2|
<|I_1 \cup I_2|-|I_1 \cap I_2|=|I_1 \triangle I_2|$.
By the hypothesis of induction, there exist
\begin{align*}
&J_1, J_2 \in B, j_1 \in J_1 \setminus J_2, j_2 \in J_2 \setminus J_1,\\
&N \in \mathcal{N}(B|_{J_1 \cap J_2})_{\rm max},
N' \in \mathcal{N}(B|_{(J_1 \triangle J_2) \setminus \{j_1, j_2\}})_{\rm max}
\end{align*}
such that $J_1 \cap J_2 \ne \emptyset, J_1 \cap J_2 \notin B$ and
\begin{equation*}
\{J_k\} \cup N \cup (B|_{J_1 \cap J_2})_{\rm max} \cup N' \cup
(B|_{(J_1 \triangle J_2) \setminus \{j_1, j_2\}})_{\rm max}
\end{equation*}
is a nested set of $B$ for each $k=1, 2$.
Furthermore, we have $J_1 \cup J_2 \subsetneq I'_1 \cup I'_2=I_1 \cup I_2$
or $|(B|_{J_1 \cap J_2})_{\rm max}| \geq 3$.

{\it Case 2}. Suppose $|(B|_{I'_1 \cap I'_2})_{\rm max}| \leq 2$.
For any $K \in (B|_{I_1 \cap I_2})_{\rm max}$,
there exists unique $L_K \in (B|_{I'_1 \cap I'_2})_{\rm max}$ such that $K \subset L_K$.
Hence there exists $L \in (B|_{I'_1 \cap I'_2})_{\rm max}$
that contains more than one element of $ (B|_{I_1 \cap I_2})_{\rm max}$.
Let $K_1, \ldots, K_r$ be all elements of $(B|_{I_1 \cap I_2})_{\rm max}$
contained in $L$.
Note that $I_1 \cap I_2 \cap L$ is the disjoint union of $K_1, \ldots, K_r$.
If $L \subset I_1 \cap I_2$,
then $B \ni L=I_1 \cap I_2 \cap L=K_1 \cup \cdots \cup K_r \notin B$,
a contradiction. Thus $L \not\subset I_1 \cap I_2$.
We may assume $L \not\subset I_1$.

{\it Subcase 2.1}. Suppose $I_1 \cap L \in B$.
If $L \subset I_2$, then
$B \ni I_1 \cap L=I_1 \cap I_2 \cap L=K_1 \cup \cdots \cup K_r \notin B$,
a contradiction. Thus $L \not\subset I_2$.
We put $I''_1=I_1 \cap L$ and $I''_2=I_2$.
Then we have $I''_1 \cap I''_2=I_1 \cap I_2 \cap L \notin B,
L \setminus I_2 \subset I''_1 \setminus I''_2$
and $i_2 \in I''_2 \setminus I''_1$.
Since $i_1 \in (I_1 \cup I_2) \setminus (I''_1 \cup I''_2)$,
we have $I''_1 \cup I''_2 \subsetneq I_1 \cup I_2$.

{\it Subcase 2.2}. Suppose $I_1 \cap L \notin B$. We put $I''_1=I_1$ and $I''_2=L$.
Then we have $I''_1 \cap I''_2=I_1 \cap L \notin B, i_1 \in I''_1 \setminus I''_2$
and $I''_2 \setminus I''_1=L \setminus I_1 \ne \emptyset$.
Since $i_2 \in (I_1 \cup I_2) \setminus (I''_1 \cup I''_2)$,
we have $I''_1 \cup I''_2 \subsetneq I_1 \cup I_2$.

In every subcase, by Lemma \ref{lemb}, there exist
\begin{align*}
&J_1, J_2 \in B, j_1 \in J_1 \setminus J_2, j_2 \in J_2 \setminus J_1,\\
&N \in \mathcal{N}(B|_{J_1 \cap J_2})_{\rm max},
N' \in \mathcal{N}(B|_{(J_1 \triangle J_2) \setminus \{j_1, j_2\}})_{\rm max}
\end{align*}
such that
$J_1 \cap J_2 \ne \emptyset, J_1 \cap J_2 \notin B,
J_1 \cup J_2 \subset I''_1 \cup I''_2 \subsetneq I_1 \cup I_2$ and
\begin{equation*}
\{J_k\} \cup N \cup (B|_{J_1 \cap J_2})_{\rm max} \cup N' \cup
(B|_{(J_1 \triangle J_2) \setminus \{j_1, j_2\}})_{\rm max}
\end{equation*}
is a nested set of $B$ for each $k=1, 2$.

Therefore the assertion holds for $|I_1 \triangle I_2|$.
\end{proof}

\begin{example}
Let $S=\{1, 2, 3, 4, 5, 6, 7\}$ and
\begin{align*}
B&=\{\{1\}, \{2\}, \{3\}, \{4\}, \{5\}, \{6\}, \{7\},
\{2, 4, 6\}, \{2, 3, 4, 5\}, \{1, 2, 3, 4, 5\},\\
&\{2, 3, 4, 5, 6\}, \{3, 4, 5, 6, 7\},
\{1, 2, 3, 4, 5, 6\}, \{2, 3, 4, 5, 6, 7\}, \{1, 2, 3, 4, 5, 6, 7\}\}.
\end{align*}
Let us consider $I_1=\{1, 2, 3, 4, 5\}$ and $I_2=\{3, 4, 5, 6, 7\}$.
We pick $i_1=1$ and $i_2=7$. Then
\begin{equation*}
B|_{I_1 \cap I_2}=\{\{3\}, \{4\}, \{5\}\},\quad
B|_{(I_1 \triangle I_2) \setminus \{i_1, i_2\}}=\{\{2\}, \{6\}\}.
\end{equation*}
The only maximal nested set of each is the empty set. However,
\begin{align*}
&\{I_1\} \cup \emptyset \cup (B|_{I_1 \cap I_2})_{\rm max}
\cup \emptyset \cup(B|_{(I_1 \triangle I_2) \setminus \{i_1, i_2\}})_{\rm max}\\
&=\{\{1, 2, 3, 4, 5\}, \{3\}, \{4\}, \{5\}, \{2\}, \{6\}\}
\end{align*}
is not a nested set because
$\{4\} \cup \{2\} \cup \{6\}=\{2, 4, 6\} \in B$ (Lemma \ref{lema}, Case 2).
Thus we put
\begin{equation*}
I^{(1)}_1=I_1 \cup \{2, 4, 6\}=\{1, 2, 3, 4, 5, 6\},\quad
I^{(1)}_2=I_2 \cup \{2, 4, 6\}=\{2, 3, 4, 5, 6, 7\}.
\end{equation*}
We have $I^{(1)}_1 \cap I^{(1)}_2=\{2, 3, 4, 5, 6\} \in B$
(Lemma \ref{lemc}, Case 2) and $L=\{2, 3, 4, 5, 6\}$.
Since $L \not\subset I_1$ and $I_1 \cap L=\{2, 3, 4, 5\} \in B$ (Subcase 2.1), we put
\begin{equation*}
I^{(2)}_1=I_1 \cap L=\{2, 3, 4, 5\},\quad
I^{(2)}_2=I_2=\{3, 4, 5, 6, 7\}.
\end{equation*}
We pick $i^{(2)}_1=2$ and $i^{(2)}_2=7$. Then
\begin{equation*}
B|_{I^{(2)}_1 \cap I^{(2)}_2}=\{\{3\}, \{4\}, \{5\}\},\quad
B|_{(I^{(2)}_1 \triangle I^{(2)}_2) \setminus \{i^{(2)}_1, i^{(2)}_2\}}=\{\{6\}\}.
\end{equation*}
The only maximal nested set of each is the empty set.
\begin{align*}
&\{I^{(2)}_1\} \cup \emptyset \cup (B|_{I^{(2)}_1 \cap I^{(2)}_2})_{\rm max}
\cup \emptyset \cup(B|_{(I^{(2)}_1 \triangle I^{(2)}_2)
\setminus \{i^{(2)}_1, i^{(2)}_2\}})_{\rm max}\\
&=\{\{2, 3, 4, 5\}, \{3\}, \{4\}, \{5\}, \{6\}\}
\end{align*}
is not a nested set because
$\{2, 3, 4, 5\} \cup \{6\}=\{2, 3, 4, 5, 6\} \in B$ (Lemma \ref{lema}, Case 3).
Thus we put
\begin{equation*}
I^{(3)}_1=\{2, 3, 4, 5, 6\},\quad
I^{(3)}_2=I^{(2)}_2=\{3, 4, 5, 6, 7\}.
\end{equation*}
We have $I^{(3)}_1 \cap I^{(3)}_2=\{3, 4, 5, 6\} \notin B$ (Lemma \ref{lemb}, Case 1) and
\begin{equation*}
B|_{I^{(3)}_1 \cap I^{(3)}_2}=\{\{3\}, \{4\}, \{5\}, \{6\}\},\quad
|I^{(3)}_1 \triangle I^{(3)}_2|=2.
\end{equation*}
The only maximal nested set of $B|_{I^{(3)}_1 \cap I^{(3)}_2}$ is the empty set and
\begin{align*}
\{I^{(3)}_1\} \cup \emptyset \cup (B|_{I^{(3)}_1 \cap I^{(3)}_2})_{\rm max}
&=\{\{2, 3, 4, 5, 6\}, \{3\}, \{4\}, \{5\}, \{6\}\},\\
\{I^{(3)}_2\} \cup \emptyset \cup (B|_{I^{(3)}_1 \cap I^{(3)}_2})_{\rm max}
&=\{\{3, 4, 5, 6, 7\}, \{3\}, \{4\}, \{5\}, \{6\}\}
\end{align*}
are nested sets of $B$.
Furthermore, we have $I^{(3)}_1 \cup I^{(3)}_2 \subsetneq I_1 \cup I_2$.
\end{example}

\begin{example}
Let $S=\{1, 2, 3, 4, 5, 6, 7\}$ and
\begin{align*}
B&=\{\{1\}, \{2\}, \{3\}, \{4\}, \{5\}, \{6\}, \{7\},
\{2, 6\}, \{4, 5, 6\}, \{2, 4, 5, 6\}, \{1, 2, 3, 4, 5\},\\
&\{2, 3, 4, 5, 6\}, \{3, 4, 5, 6, 7\},
\{1, 2, 3, 4, 5, 6\}, \{2, 3, 4, 5, 6, 7\}, \{1, 2, 3, 4, 5, 6, 7\}\}.
\end{align*}
Let us consider $I_1=\{1, 2, 3, 4, 5\}$ and $I_2=\{3, 4, 5, 6, 7\}$.
We pick $i_1=1$ and $i_2=7$. Then
\begin{equation*}
B|_{I_1 \cap I_2}=\{\{3\}, \{4\}, \{5\}\},\quad
B|_{(I_1 \triangle I_2) \setminus \{i_1, i_2\}}=\{\{2\}, \{6\}, \{2, 6\}\}.
\end{equation*}
The only maximal nested set of $B|_{I_1 \cap I_2}$ is the empty set.
We choose
$N'=\{\{2\}\} \in \mathcal{N}(B|_{(I_1 \triangle I_2) \setminus \{i_1, i_2\}})_{\rm max}$.
\begin{align*}
&\{I_1\} \cup \emptyset \cup (B|_{I_1 \cap I_2})_{\rm max}
\cup N' \cup(B|_{(I_1 \triangle I_2) \setminus \{i_1, i_2\}})_{\rm max}\\
&=\{\{1, 2, 3, 4, 5\}, \{3\}, \{4\}, \{5\}, \{2\}, \{2, 6\}\}
\end{align*}
is not a nested set because
$\{1, 2, 3, 4, 5\} \cap \{2, 6\}=\{2\} \ne \emptyset$ (Lemma \ref{lema}, Case 1).
Thus we put
\begin{equation*}
I^{(1)}_1=I_1 \cup \{2, 6\}=\{1, 2, 3, 4, 5, 6\},\quad
I^{(1)}_2=I_2=\{3, 4, 5, 6, 7\}.
\end{equation*}
We have $I^{(1)}_1 \cap I^{(1)}_2=\{3, 4, 5, 6\}=\{3\} \cup \{4, 5, 6\}$
(Lemma \ref{lemc}, Case 2) and $L=\{4, 5, 6\}$.
Since $L \not\subset I_1$ and $I_1 \cap L=\{4, 5\} \notin B$ (Subcase 2.2), we put
\begin{equation*}
I^{(2)}_1=I_1=\{1, 2, 3, 4, 5\},\quad
I^{(2)}_2=L=\{4, 5, 6\}.
\end{equation*}
We pick $i^{(2)}_1=1$ and $i^{(2)}_2=6$. Then
\begin{equation*}
B|_{I^{(2)}_1 \cap I^{(2)}_2}=\{\{4\}, \{5\}\},\quad
B|_{(I^{(2)}_1 \triangle I^{(2)}_2) \setminus \{i^{(2)}_1, i^{(2)}_2\}}=\{\{2\}, \{3\}\}.
\end{equation*}
The only maximal nested set of each is the empty set.
\begin{align*}
&\{I^{(2)}_2\} \cup \emptyset \cup (B|_{I^{(2)}_1 \cap I^{(2)}_2})_{\rm max}
\cup \emptyset \cup(B|_{(I^{(2)}_1 \triangle I^{(2)}_2)
\setminus \{i^{(2)}_1, i^{(2)}_2\}})_{\rm max}\\
&=\{\{4, 5, 6\}, \{4\}, \{5\}, \{2\}, \{3\}\}
\end{align*}
is not a nested set because
$\{4, 5, 6\} \cup \{2\} \cup \{3\}=\{2, 3, 4, 5, 6\} \in B$ (Lemma \ref{lema}, Case 3).
Thus we put
\begin{equation*}
I^{(3)}_1=I^{(2)}_1=\{1, 2, 3, 4, 5\},\quad
I^{(3)}_2=\{2, 3, 4, 5, 6\}.
\end{equation*}
We have $I^{(3)}_1 \cap I^{(3)}_2=\{2, 3, 4, 5\} \notin B$ (Lemma \ref{lemb}, Case 1) and
\begin{equation*}
B|_{I^{(3)}_1 \cap I^{(3)}_2}=\{\{2\}, \{3\}, \{4\}, \{5\}\},\quad
|I^{(3)}_1 \triangle I^{(3)}_2|=2.
\end{equation*}
The only maximal nested set of $B|_{I^{(3)}_1 \cap I^{(3)}_2}$ is the empty set and
\begin{align*}
\{I^{(3)}_1\} \cup \emptyset \cup (B|_{I^{(3)}_1 \cap I^{(3)}_2})_{\rm max}
&=\{\{1, 2, 3, 4, 5\}, \{2\}, \{3\}, \{4\}, \{5\}\},\\
\{I^{(3)}_2\} \cup \emptyset \cup (B|_{I^{(3)}_1 \cap I^{(3)}_2})_{\rm max}
&=\{\{2, 3, 4, 5, 6\}, \{2\}, \{3\}, \{4\}, \{5\}\}
\end{align*}
are nested sets of $B$.
Furthermore, we have $I^{(3)}_1 \cup I^{(3)}_2 \subsetneq I_1 \cup I_2$.
\end{example}

\begin{proposition}[{\cite[Proposition 3.2]{Zelevinsky}}]\label{link}
Let $B$ be a building set on $S$ and let $C \in B \setminus B_{\rm max}$.
Then the correspondence
\begin{equation*}
I \mapsto \left\{\begin{array}{ll}
I \setminus C & (C \subset I), \\
I & (C \not\subset I) \end{array}\right.
\end{equation*}
induces an isomorphism
$\mathcal{N}(B)_C \rightarrow \mathcal{N}(B|_C \cup (C \setminus B))$
of simplicial complexes.
\end{proposition}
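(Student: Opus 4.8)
Write $\phi$ for the correspondence of the statement. The plan is to prove two things and then deduce the proposition: (a) $\phi$ restricts to a bijection from the vertex set of $\mathcal{N}(B)_C$ onto the vertex set of $\mathcal{N}(B|_C \cup (C \setminus B))$; and (b) for a set $N$ of vertices of $\mathcal{N}(B)_C$, one has $N \cup \{C\} \in \mathcal{N}(B)$ if and only if $\phi(N) \in \mathcal{N}(B|_C \cup (C \setminus B))$. Since $\phi$ acts element by element, (a) and (b) together yield the desired isomorphism of simplicial complexes.

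For (a) I would argue as follows. If $I$ is a vertex of $\mathcal{N}(B)_C$, then $\{I, C\} \in \mathcal{N}(B)$, so by condition (1) of Definition \ref{nested} either $I \subsetneq C$, or $C \subsetneq I$, or $I \cap C = \emptyset$; in the last case condition (2) of Definition \ref{nested} additionally forces $I \cup C \notin B$. A short check in each case shows that $\phi(I)$ lies in $B|_C$ (when $I \subsetneq C$) or in $C \setminus B$ (when $C \subsetneq I$, because $C \cup (I \setminus C) = I \in B$; when $I \cap C = \emptyset$, because $I \in B$), and that $\phi(I)$ is non-maximal there. For the inverse I would send a vertex $J$ of $\mathcal{N}(B|_C \cup (C \setminus B))$ to $J$ when $J \subset C$, to $C \cup J$ when $J \subset S \setminus C$ and $C \cup J \in B$, and to $J$ when $J \subset S \setminus C$ and $C \cup J \notin B$. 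The point that needs care is that a subset of $S \setminus C$ can lie in $C \setminus B$ for two reasons at once ($J \in B$ and $C \cup J \in B$); the rule above is forced, since in that situation $J$ does not form a nested pair with $C$ (condition (2) of Definition \ref{nested} fails, as $J \cup C \in B$) whereas $C \cup J$ does, so the $\phi$-preimage of $J$ is $C \cup J$. Checking that this inverse lands among the vertices of $\mathcal{N}(B)_C$ uses the observation that $J$ non-maximal in $C \setminus B$ together with $C \cup J \in B$ forces $C \cup J$ non-maximal in $B$, which one gets by comparing $C \cup J$ with $C \cup J'$ for a strict superset $J' \supsetneq J$ in $C \setminus B$.

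For (b), the organizing principle is that every vertex of $\mathcal{N}(B)_C$ is either $\subsetneq C$, or $\supsetneq C$, or disjoint from $C$ with the extra property $I \cup C \notin B$, and that axiom (1) of a building set, applied to $C$, moves a union into and out of $B$ upon intersecting with $C$. I would verify conditions (1) and (2) of Definition \ref{nested} for $N \cup \{C\}$ from the corresponding statements for $\phi(N)$ and conversely, splitting into cases according to the position of each member relative to $C$: among pairwise disjoint members at most one can contain $C$; a member inside $C$ and a member disjoint from $C$ land in the two different ``blocks'' $C$ and $S \setminus C$, so that their union---and any union straddling both blocks---automatically lies outside $B|_C \cup (C \setminus B)$; and for families that stay within one block the contraction identity $C \cup X \in B \iff X \in C \setminus B$ (valid once $X \subseteq S \setminus C$ and $C \cup X \in B$) does the translation, with the property $I \cup C \notin B$ for link-vertices disjoint from $C$ handling the singleton families that this identity misses. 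A further subtlety is that $\phi$ can turn two comparable---hence intersecting---sets both containing $C$ into two disjoint sets, so in the reverse direction the validity of condition (1) of Definition \ref{nested} for $N \cup \{C\}$ is deduced not from condition (1) but from condition (2) for $\phi(N)$, using that $C \cup H \cup H' \in B$ whenever $C \cup H, C \cup H' \in B$.

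The hard part will be precisely this bookkeeping in (b): one must ensure that every pairwise disjoint family that has to be tested on one side is matched by an appropriate pairwise disjoint family on the other---delicate because $\phi$ need not preserve disjointness---and one must treat the ``two reasons'' ambiguity in $C \setminus B$ consistently throughout, including inside the non-maximality arguments of (a). No single verification is difficult; keeping the case analysis exhaustive and symmetric across the two complexes is where the effort goes.
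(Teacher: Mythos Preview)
The paper does not supply its own proof of this proposition; it is quoted verbatim as \cite[Proposition 3.2]{Zelevinsky} and used as a black box in the proof of Lemma~\ref{enlarge}. There is therefore nothing in the paper to compare your argument against.

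Assessed on its own terms, your plan is sound and is the natural direct verification. The two-part structure (vertex bijection, then simplex correspondence) is the right decomposition, and you have correctly identified the two genuine subtleties: the ``two reasons'' ambiguity in $C \setminus B$ when both $J \in B$ and $C \cup J \in B$ hold (resolved, as you say, by the fact that only $C \cup J$ forms a nested pair with $C$), and the failure of $\phi$ to preserve disjointness in the direction where two sets both containing $C$ become disjoint after stripping $C$. Your remark that condition~(1) for $N \cup \{C\}$ must in that case be recovered from condition~(2) for $\phi(N)$ via $C \cup H \cup H' \in B$ is exactly the right mechanism. One small wording issue: the ``contraction identity'' you invoke, $C \cup X \in B \iff X \in C \setminus B$, is not literally an equivalence (the reverse direction only gives $X \in B$ or $C \cup X \in B$); what you actually need and use is that for a disjoint family $I_1,\ldots,I_k$ of vertices with each $I_j \supsetneq C$ or $I_j \cap C = \emptyset$, the union lies in $B$ if and only if the corresponding union of the $\phi(I_j)$ together with $C$ lies in $B$, and this follows since adjoining $C$ does not change the union. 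With that clarified, the bookkeeping goes through.
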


\begin{lemma}\label{enlarge}
Let $J_1, J_2 \in B$ with $J_1 \cap J_2 \ne \emptyset,
J_1 \not\subset J_2, J_2 \not\subset J_1$ and $J_1 \cup J_2 \subsetneq S$.
Let $N'' \in \mathcal{N}(B|_{J_1 \cup J_2})$ such that
$\{J_k\} \cup N'' \in \mathcal{N}(B|_{J_1 \cup J_2})_{\rm max}$
for each $k=1, 2$.
Then there exists $M \in \mathcal{N}(B)$ such that
$\{J_k, J_1 \cup J_2\} \cup N'' \cup M \in \mathcal{N}(B)_{\rm max}$
for each $k=1, 2$.
\end{lemma}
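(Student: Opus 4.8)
The plan is to transport the whole problem into the link of the element $C:=J_1\cup J_2$ and to apply Proposition \ref{link}.

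First I would fix notation. Since $J_1\cap J_2\neq\emptyset$, condition (1) in the definition of a building set gives $C=J_1\cup J_2\in B$, and, $B$ being connected, $C\subsetneq S$ forces $C\in B\setminus B_{\rm max}$; moreover $J_1\subsetneq C$ and $J_2\subsetneq C$ because $J_1\not\subset J_2$ and $J_2\not\subset J_1$. Note that $B|_{J_1\cup J_2}=B|_C$ is a connected building set on $C$, so the hypothesis is precisely that $\{J_k\}\cup N''\in\mathcal{N}(B|_C)_{\rm max}$ for $k=1,2$. Put $B':=B|_C\cup(C\setminus B)$. By Proposition \ref{link}, the assignment $I\mapsto I\setminus C$ for $C\subset I$ and $I\mapsto I$ otherwise is a simplicial isomorphism $\phi\colon\mathcal{N}(B)_C\to\mathcal{N}(B')$; in particular $\phi$ fixes $J_1$, $J_2$ and every element of $N''$, each of these being a subset of $C$ different from $C$. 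Observe also that $B'$ is the disjoint union of the connected building set $B|_C$ on $C$ and the building set $C\setminus B$ on $S\setminus C$, so $B'|_C=B|_C$ and the $B$-components of $B'$ are $C$ together with the $B$-components $D_1,\dots,D_m$ of $C\setminus B$, with $B'|_{D_j}=(C\setminus B)|_{D_j}$.

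Next I would produce the extension on the $B'$ side. For any building set, a nested set is just a disjoint union, over its $B$-components, of nested sets of the restrictions — conditions (1) and (2) of Definition \ref{nested} never involve two elements lying in different components — and the same description holds for maximal nested sets. Hence, choosing for each $j$ an arbitrary $M_j\in\mathcal{N}((C\setminus B)|_{D_j})_{\rm max}$ and setting $\bar M:=M_1\cup\dots\cup M_m$, the set $F_k:=\{J_k\}\cup N''\cup\bar M$ lies in $\mathcal{N}(B')_{\rm max}$ for each $k=1,2$: its part contained in $C$ is the maximal nested set $\{J_k\}\cup N''$ of $B'|_C=B|_C$, and its part contained in $D_j$ is the maximal nested set $M_j$ of $B'|_{D_j}$. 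The crucial point is that $\bar M$ does not depend on $k$.

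Finally I would pull everything back. Set $M:=\phi^{-1}(\bar M)\in\mathcal{N}(B)_C$. Since $\phi$ is a simplicial isomorphism fixing $J_k$ and the elements of $N''$, we get $\phi^{-1}(F_k)=\{J_k\}\cup N''\cup M$, which is therefore maximal in $\mathcal{N}(B)_C$; by definition of the link this says that $\{J_k,C\}\cup N''\cup M=\{J_k,J_1\cup J_2\}\cup N''\cup M$ is a nested set of $B$, and it is a maximal one because a nested set of $B$ containing $C$ is a facet of $\mathcal{N}(B)$ if and only if the nested set obtained by deleting $C$ is a facet of $\mathcal{N}(B)_C$. As $M$ is independent of $k$, this is the desired $M$. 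The argument is essentially bookkeeping once one notices the point that drives it: since $\{J_k\}\cup N''$ is already \emph{maximal} in $\mathcal{N}(B|_{J_1\cup J_2})$, no element contained in $C$ can be added to it, so the extension $M$ is forced to live entirely in the components $D_1,\dots,D_m$ outside $C$ — and these components carry no information distinguishing $J_1$ from $J_2$, which is exactly why one single $M$ serves for both values of $k$. The only steps where I would be careful are the identifications $B'|_C=B|_C$ and $B'|_{D_j}=(C\setminus B)|_{D_j}$, and checking maximality (not merely the nested-set property) after each transfer.
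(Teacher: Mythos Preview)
Your proof is correct and follows essentially the same route as the paper's: pick a maximal nested set of $(J_1\cup J_2)\setminus B$, observe that its union with $\{J_k\}\cup N''$ is maximal in $\mathcal{N}\bigl(B|_{J_1\cup J_2}\cup((J_1\cup J_2)\setminus B)\bigr)$, and then pull back through the link isomorphism of Proposition~\ref{link}. You have simply unpacked more of the bookkeeping (the component decomposition of $B'$ and the verification that maximality transfers), which the paper leaves implicit.
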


\begin{proof}
We pick $M' \in \mathcal{N}((J_1 \cup J_2) \setminus B)_{\rm max}$. Then
\begin{align*}
\{J_k\} \cup N'' \cup M'
\in \mathcal{N}(B|_{J_1 \cup J_2} \cup ((J_1 \cup J_2) \setminus B))_{\rm max}
\end{align*}
for each $k=1, 2$. Hence by Proposition \ref{link},
there exists $M \in \mathcal{N}(B)$ such that
$ \{J_k\} \cup N'' \cup M$
are maximal simplices of $\mathcal{N}(B)_{J_1 \cup J_2}$. Hence
$\{J_k, J_1 \cup J_2\} \cup N'' \cup M \in \mathcal{N}(B)_{\rm max}$
for each $k=1, 2$.
\end{proof}

\begin{proposition}[{\cite[Proposition 4.5]{Zelevinsky}}]\label{pair}
Let $B$ be a building set on $S$
and let $I_1, I_2 \in B$ with $I_1 \ne I_2$ and $N \in \mathcal{N}(B)$
such that $N \cup \{I_1\}, N \cup \{I_2\} \in \mathcal{N}(B)_{\rm max}$.
Then the following hold:
\begin{enumerate}
\item We have $I_1 \not\subset I_2$ and $I_2 \not\subset I_1$.
\item If $I_1 \cap I_2 \ne \emptyset$, then $(B|_{I_1 \cap I_2})_{\rm max} \subset N$.
\item There exists $\{I_3, \ldots, I_k\} \subset N$ such that
$I_1 \cup I_2, I_3, \ldots, I_k$ are pairwise disjoint
and $I_1 \cup \cdots \cup I_k \in N \cup B_{\rm max}$ ($\{I_3, \ldots, I_k\}$ can be empty).
\end{enumerate}
\end{proposition}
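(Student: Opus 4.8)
The plan is to exploit maximality repeatedly. First note that $I_1\notin N$ and $I_2\notin N$: if $I_1\in N$ then $N=N\cup\{I_1\}$ is already maximal, which forces $I_2\in N$ and $N\cup\{I_1\}=N\cup\{I_2\}$, the degenerate case, which is excluded. Hence $N\cup\{I_1,I_2\}$ strictly contains the maximal nested set $N\cup\{I_1\}$, so it must fail one of the two conditions of Definition~\ref{nested}, and the whole proof is driven by analysing which one and extracting combinatorial data from the failure. The recurring mechanism: whenever $J\in B\setminus B_{\rm max}$ is such that $N\cup\{I_1,J\}$ satisfies both conditions of Definition~\ref{nested}, maximality of $N\cup\{I_1\}$ forces $J\in N\cup\{I_1\}$; one applies this with $J$ chosen so that $J\notin N\cup\{I_1\}$ is manifest, obtaining a contradiction.

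For (1), assume without loss of generality $I_1\subsetneq I_2$. I would show $N\cup\{I_1,I_2\}$ is a nested set, contradicting maximality of $N\cup\{I_2\}$. Condition~(1) of Definition~\ref{nested} holds pairwise: $\{I_1,I_2\}$ is a chain, and each pair $\{K,I_1\}$, $\{K,I_2\}$ with $K\in N$ is governed by the nestedness of $N\cup\{I_1\}$ and of $N\cup\{I_2\}$. Condition~(2) holds because $I_1$ and $I_2$ are not disjoint, so any pairwise disjoint subfamily omits one of them and therefore lies in $N\cup\{I_1\}$ or in $N\cup\{I_2\}$.

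For (2) and (3), the key first observation is that whichever condition of Definition~\ref{nested} fails for $N\cup\{I_1,I_2\}$, the witnessing pair — respectively, pairwise disjoint family — must contain both $I_1$ and $I_2$, because a bad configuration avoiding $I_2$ lies in the nested set $N\cup\{I_1\}$ and one avoiding $I_1$ lies in $N\cup\{I_2\}$. Thus if $I_1\cap I_2\neq\emptyset$ it is condition~(1) that fails, so $I_1,I_2$ are incomparable (whence $I_1\cup I_2\in B$); if $I_1\cap I_2=\emptyset$ it is condition~(2), yielding pairwise disjoint $I_3,\ldots,I_k\in N$, each disjoint from $I_1\cup I_2$, with $I_1\cup\cdots\cup I_k\in B$. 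For (2) (so $I_1\cap I_2\neq\emptyset$): take $K\in(B|_{I_1\cap I_2})_{\rm max}$ and suppose $K\notin N$; since $K\subsetneq I_1\cap I_2\subsetneq I_1$ the set $N\cup\{I_1,K\}$ strictly contains $N\cup\{I_1\}$, so it is enough to verify it is nested. Any $J\in N$ meeting $K$ meets both $I_1$ and $I_2$, hence is comparable with each; it is then comparable with $K$ using that $K$ is maximal in $B|_{I_1\cap I_2}$. A bad pairwise disjoint family through $K$ is handled by splitting its members according to whether they lie inside $I_1$ and applying the union axiom for building sets, which relocates the forbidden union into $N\cup\{I_1\}$ or into $N\cup\{I_2\}$. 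For (3): in both cases one has a possibly empty family $\{I_3,\ldots,I_k\}\subset N$ with $I_1\cup I_2,I_3,\ldots,I_k$ pairwise disjoint and $I_1\cup\cdots\cup I_k\in B$; choose one maximizing $U:=I_1\cup\cdots\cup I_k$. If $U\notin B_{\rm max}$ and $U\notin N$, the same nestedness check — now invoking the maximality of $U$ in place of that of $K$ for the disjoint-union condition — shows $N\cup\{I_1,U\}$ is a nested set larger than $N\cup\{I_1\}$, a contradiction; hence $U\in N\cup B_{\rm max}$.

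The step I expect to be the real obstacle is this last nestedness verification: the delicate configuration is an element $J\in N$ that strictly \emph{contains} $I_1$ while interacting with $I_2$ and with the $I_j$'s, and closing it requires playing the nestedness of $N\cup\{I_1\}$ against that of $N\cup\{I_2\}$ together with the chosen maximality of $K$, respectively $U$, to guarantee that no disjoint union lands in $B$. A shorter route would sidestep all of this: since $X(\Delta(B))$ is projective, $\Delta(B)$ is a complete fan, so the codimension-one cone $\mathbb{R}_{\geq 0}N$ is a facet of exactly two maximal cones, namely $\mathbb{R}_{\geq 0}(N\cup\{I_1\})$ and $\mathbb{R}_{\geq 0}(N\cup\{I_2\})$; Proposition~\ref{intersection number} then gives a relation $e_{I_1}+e_{I_2}+\sum_{K\in N}a_Ke_K=0$ with integer $a_K$, and substituting $e_{I_1}+e_{I_2}=e_{I_1\cap I_2}+e_{I_1\cup I_2}$ and reading off which $a_K$ are negative recovers (2) and (3).
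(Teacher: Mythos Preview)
The paper does not give a proof of this proposition; it is quoted from \cite{Zelevinsky} and used as a black box, so there is no argument in the paper to compare yours against.

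Your combinatorial approach is essentially correct. The opening observation that $I_1,I_2\notin N$ (hence $N\cup\{I_1,I_2\}$ properly contains a maximal nested set and must violate one of the axioms) and the proof of~(1) are clean, as is the ``key first observation'' that any witnessing configuration must involve both $I_1$ and $I_2$. The strategy for~(2) and~(3)---adjoin the candidate element $K$, respectively $U$, to $N\cup\{I_1\}$ and contradict maximality---is right, and the case analysis you anticipate does close: for instance in~(3), a $J\in N$ meeting $U$ but incomparable with it forces, after sorting the $I_j$ into those contained in $J$ and those disjoint from it, either a forbidden disjoint union inside $N$, inside $N\cup\{I_1\}$, inside $N\cup\{I_2\}$, or a strictly larger admissible family contradicting the choice of $U$. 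So the obstacle you flag is real but surmountable exactly along the lines you indicate.

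Two caveats. First, the ``degenerate case'' $I_1,I_2\in N$ is not literally excluded by the hypothesis $I_1\neq I_2$; it is excluded only because the proposition is meant to describe a codimension-one wall, and the paper applies it only with $|N|=|S|-|B_{\rm max}|-1$. Second, your proposed shorter route via the wall relation is circular as written: substituting $e_{I_1}+e_{I_2}=e_{I_1\cap I_2}+e_{I_1\cup I_2}$ and then ``reading off which $a_K$ are negative'' presupposes that $I_1\cup I_2$ and the elements of $(B|_{I_1\cap I_2})_{\rm max}$ already lie in $N\cup B_{\rm max}$, which is precisely the content of~(2) and~(3). Making that route honest would require an independent computation of the coordinates of $e_{I_2}$ in the basis $\{e_K:K\in N\cup\{I_1\}\}$, and that computation is the same combinatorics you have already sketched.
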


We are now ready to prove Theorem \ref{main}.

\begin{proof}[Proof of Theorem \ref{main}]
The disjoint union of connected building sets yields
the product of toric varieties associated to the connected building sets.
Since the product of nonsingular projective toric varieties is weak Fano if and only if
every factor is weak Fano, it suffices to show that,
for any connected building set $B$ on $S=\{1, \ldots, n+1\}$,
the following are equivalent:
\begin{itemize}
\item[$(1')$] The associated toric variety $X(\Delta(B))$ is weak Fano.
\item[$(2')$] For any $I_1, I_2 \in B$ such that $I_1 \cap I_2 \ne \emptyset,
I_1 \not\subset I_2$ and $I_2 \not\subset I_1$, we have at least one of the following:
\begin{itemize}
\item[$({\rm i}')$] $I_1 \cap I_2 \in B$.
\item[$({\rm ii}')$] $I_1 \cup I_2=S$ and $|(B|_{I_1 \cap I_2})_{\rm max}| \leq 2$.
\end{itemize}
\end{itemize}

$(1') \Rightarrow (2')$:
Let $I_1, I_2 \in B$ such that $I_1 \cap I_2 \ne \emptyset,
I_1 \not\subset I_2, I_2 \not\subset I_1$ and $I_1 \cap I_2 \notin B$.
We show that if $I_1 \cup I_2 \subsetneq S$ or $|(B|_{I_1 \cap I_2})_{\rm max}| \geq 3$,
then the toric variety $X(\Delta(B))$ is not weak Fano.

{\it Case 1}. Suppose $I_1 \cup I_2 \subsetneq S$.
By Lemma \ref{lemb}, there exist
\begin{align*}
&J_1, J_2 \in B, j_1 \in J_1 \setminus J_2, j_2 \in J_2 \setminus J_1,\\
&N \in \mathcal{N}(B|_{J_1 \cap J_2})_{\rm max},
N' \in \mathcal{N}(B|_{(J_1 \triangle J_2) \setminus \{j_1, j_2\}})_{\rm max}
\end{align*}
such that $J_1 \cap J_2 \ne \emptyset, J_1 \cap J_2 \notin B,
J_1 \cup J_2 \subset I_1 \cup I_2 \subsetneq S$ and
\begin{equation}\label{J_k}
\{J_k\} \cup N \cup (B|_{J_1 \cap J_2})_{\rm max} \cup N' \cup
(B|_{(J_1 \triangle J_2) \setminus \{j_1, j_2\}})_{\rm max}
\end{equation}
is a nested set of $B$ for each $k=1, 2$.
Since the cardinality of (\ref{J_k}) is $|J_1 \cup J_2|-1$,
(\ref{J_k}) is a maximal nested set of $B|_{J_1 \cup J_2}$.
By Lemma \ref{enlarge}, there exists $M \in \mathcal{N}(B)$ such that
\begin{equation*}
\{J_k, J_1 \cup J_2\} \cup N \cup (B|_{J_1 \cap J_2})_{\rm max} \cup N' \cup
(B|_{(J_1 \triangle J_2) \setminus \{j_1, j_2\}})_{\rm max} \cup M \in \mathcal{N}(B)_{\rm max}
\end{equation*}
for $k=1, 2$. Let
\begin{equation*}
\tau=\mathbb{R}_{\geq0}(\{J_1 \cup J_2\} \cup N \cup (B|_{J_1 \cap J_2})_{\rm max}
\cup N' \cup (B|_{(J_1 \triangle J_2) \setminus \{j_1, j_2\}})_{\rm max} \cup M).
\end{equation*}
Clearly
\begin{equation*}
e_{J_1}+e_{J_2}-\sum_{C \in (B|_{J_1 \cap J_2})_{\rm max}}e_C-e_{J_1 \cup J_2}=0.
\end{equation*}
Since $|(B|_{J_1 \cap J_2})_{\rm max}| \geq 2$,
Proposition \ref{intersection number} gives
\begin{equation*}
(-K_{X(\Delta(B))}.V(\tau))=2-|(B|_{J_1 \cap J_2})_{\rm max}|-1 \leq 2-2-1=-1.
\end{equation*}
Therefore $X(\Delta(B))$ is not weak Fano by Proposition \ref{weak Fano}.

{\it Case 2}. Suppose that $I_1 \cup I_2=S$ and $|(B|_{I_1 \cap I_2})_{\rm max}| \geq 3$.
By Lemma \ref{lemc}, there exist
\begin{align*}
&J_1, J_2 \in B, j_1 \in J_1 \setminus J_2, j_2 \in J_2 \setminus J_1,\\
&N \in \mathcal{N}(B|_{J_1 \cap J_2})_{\rm max},
N' \in \mathcal{N}(B|_{(J_1 \triangle J_2) \setminus \{j_1, j_2\}})_{\rm max}
\end{align*}
such that $J_1 \cap J_2 \ne \emptyset, J_1 \cap J_2 \notin B$ and
\begin{equation}\label{J_k2}
\{J_k\} \cup N \cup (B|_{J_1 \cap J_2})_{\rm max} \cup N' \cup
(B|_{(J_1 \triangle J_2) \setminus \{j_1, j_2\}})_{\rm max}
\end{equation}
is a nested set of $B$ for each $k=1, 2$.
Furthermore, we have $J_1 \cup J_2 \subsetneq I_1 \cup I_2=S$
or $|(B|_{J_1 \cap J_2})_{\rm max}| \geq 3$.
If $J_1 \cup J_2 \subsetneq S$,
then a similar augment shows that $X(\Delta(B))$ is not weak Fano.
Suppose that $J_1 \cup J_2=S$ and $|(B|_{J_1 \cap J_2})_{\rm max}| \geq 3$.
Then (\ref{J_k2}) is a maximal nested set of $B$. Let
\begin{equation*}
\tau=\mathbb{R}_{\geq0}(N \cup (B|_{J_1 \cap J_2})_{\rm max}
\cup N' \cup (B|_{(J_1 \triangle J_2) \setminus \{j_1, j_2\}})_{\rm max}).
\end{equation*}
Since $e_{J_1 \cup J_2}=e_S=0$, it follows that
\begin{equation*}
e_{J_1}+e_{J_2}-\sum_{C \in (B|_{J_1 \cap J_2})_{\rm max}}e_C=0.
\end{equation*}
Proposition \ref{intersection number} gives
\begin{equation*}
(-K_{X(\Delta(B))}.V(\tau))=2-|(B|_{J_1 \cap J_2})_{\rm max}| \leq 2-3=-1.
\end{equation*}
Therefore $X(\Delta(B))$ is not weak Fano by Proposition \ref{weak Fano}.

$(2') \Rightarrow (1')$: Let $I_1, I_2 \in B$ with $I_1 \ne I_2$ and $N \in \mathcal{N}(B)$
such that $N \cup \{I_1\}, N \cup \{I_2\} \in \mathcal{N}(B)_{\rm max}$.
We need to show that $(-K_{X(\Delta(B))}.V(\mathbb{R}_{\geq0}N)) \geq 0$.

{\it Case 1}. Suppose $I_1 \cap I_2=\emptyset$.
By Proposition \ref{pair} (3),
there exists $\{I_3, \ldots, I_k\} \subset N$ such that
$I_1 \cup I_2, I_3, \ldots, I_k$ are pairwise disjoint
and $I_1 \cup \cdots \cup I_k \in N \cup B_{\rm max}=N \cup \{S\}$. Since
\begin{equation*}
e_{I_1}+e_{I_2}+e_{I_3}+\cdots+e_{I_k}-e_{I_1 \cup \cdots \cup I_k}=0,
\end{equation*}
Proposition \ref{intersection number} gives
\begin{equation*}
(-K_{X(\Delta(B))}.V(\mathbb{R}_{\geq0}N))=\left\{\begin{array}{ll}
k-1 & (I_1 \cup \cdots \cup I_k \in N), \\
k & (I_1 \cup \cdots \cup I_k=S). \end{array}\right.
\end{equation*}
Hence $(-K_{X(\Delta(B))}.V(\mathbb{R}_{\geq0}N)) \geq 1$.

{\it Case 2}. Suppose $I_1 \cap I_2 \ne \emptyset$.
By Proposition \ref{pair} (1),
we have $I_1 \not\subset I_2$ and $I_2 \not\subset I_1$.

$({\rm i}')$ Suppose $I_1 \cap I_2 \in B$.
By Proposition \ref{pair} (2),
we have $\{I_1 \cap I_2\}=(B|_{I_1 \cap I_2})_{\rm max} \subset N$.
By Proposition \ref{pair} (3),
there exists $\{I_3, \ldots, I_k\} \subset N$ such that
$I_1 \cup I_2, I_3, \ldots, I_k$ are pairwise disjoint
and $I_1 \cup \cdots \cup I_k \in N \cup B_{\rm max}=N \cup \{S\}$. Since
\begin{equation*}
e_{I_1}+e_{I_2}-e_{I_1 \cap I_2}+e_{I_3}+\cdots+e_{I_k}-e_{I_1 \cup \cdots \cup I_k}=0,
\end{equation*}
Proposition \ref{intersection number} gives
\begin{equation*}
(-K_{X(\Delta(B))}.V(\mathbb{R}_{\geq0}N))=\left\{\begin{array}{ll}
k-2 & (I_1 \cup \cdots \cup I_k \in N), \\
k-1 & (I_1 \cup \cdots \cup I_k=S). \end{array}\right.
\end{equation*}
Hence $(-K_{X(\Delta(B))}.V(\mathbb{R}_{\geq0}N)) \geq 0$.

$({\rm ii}')$ Suppose that $I_1 \cup I_2=S$ and $|(B|_{I_1 \cap I_2})_{\rm max}| \leq 2$.
By Proposition \ref{pair} (2),
we have $(B|_{I_1 \cap I_2})_{\rm max} \subset N$.
Since $e_{I_1 \cup I_2}=e_S=0$, it follows that
\begin{equation*}
e_{I_1}+e_{I_2}-\sum_{C \in (B|_{I_1 \cap I_2})_{\rm max}}e_C=0.
\end{equation*}
Proposition \ref{intersection number} gives
\begin{equation*}
(-K_{X(\Delta(B))}.V(\mathbb{R}_{\geq0}N))=2-|(B|_{I_1 \cap I_2})_{\rm max}| \geq 2-2=0.
\end{equation*}

Therefore $X(\Delta(B))$ is weak Fano by Proposition \ref{weak Fano}.
This completes the proof of Theorem \ref{main}.
\end{proof}

\end{document}